\newtheorem{theorem}{Theorem}[section]
\newtheorem{lemma}[theorem]{Lemma}
\newtheorem{corollary}[theorem]{Corollary}
\newtheorem*{conjecture}{Conjecture}
\newtheorem*{theoremintr}{Theorem}
\newtheorem*{corollaryintr}{Corollary}
\theoremstyle{definition}
\newtheorem{definition}[theorem]{Definition}
\newtheorem{recollection}[theorem]{Recollection}
\newtheorem{notation}[theorem]{Notation}
\theoremstyle{remark}
\newtheorem{remark}[theorem]{Remark}
\numberwithin{equation}{section}
\newcommand{\mc}[1]{\mathcal{#1}}
\newcommand{\SL}{{\mathrm{SL}}}
\newcommand{\Hom}{{\mathrm{Hom}}}
\newcommand{\Z}{{\mathbb{Z}}}
\newcommand{\GW}{{\mathrm{GW}}}
\newcommand{\Vect}{{\rm Vect}}
\newcommand{\A}{\mathbb{A}}
\newcommand{\rank}{\operatorname{rank}}
\newcommand{\id}{\operatorname{id}}
\newcommand{\Gm}{{\mathbb{G}_m}}
\newcommand{\GL}{\mathrm{GL}}
\newcommand{\SH}{\mathcal{SH}}
\newcommand{\Ht}{\mathrm{H}}
\newcommand{\T}{\mathrm{T}}
\newcommand{\Spec}{\operatorname{Spec}}
\newcommand{\Sm}{\mathcal{S}m}
\newcommand{\et}{\mathrm{\acute{e}t}}
\newcommand{\QQ}{\mathbb{Q}}
\newcommand{\RR}{\mathbb{R}}
\newcommand{\CC}{\mathbb{C}}
\newcommand{\etale}{\'etale }
\newcommand{\Etale}{\'Etale }
\newcommand{\chark}{\operatorname{char}}
\newcommand{\SHk}{\mathcal{SH}(k)}
\newcommand{\Smk}{\mathcal{S}m_k}
\newcommand{\tensorunit}{\mathbf{1}}
\newcommand{\BettiR}{\mathrm{Re_{B\mathbb{R}}}}
\newcommand{\BettiC}{\mathrm{Re_{B\mathbb{C}}}}
\newcommand{\tp}{\mathrm{top}}
\newcommand{\crk}{\operatorname{rk_c}}
\newcommand{\pour}{\ar@{}[ur]|(0.2){\text{\pigpenfont G}}}
\newcommand{\podr}{\ar@{}[dr]|(0.2){\text{\pigpenfont A}}}
\newcommand{\leftrarrows}{\mathrel{\raise.75ex\hbox{\oalign{%
				$\scriptstyle\leftarrow$\cr
				\vrule width0pt height.5ex$\hfil\scriptstyle\relbar$\cr}}}}
\newcommand{\lrightarrows}{\mathrel{\raise.75ex\hbox{\oalign{%
				$\scriptstyle\relbar$\hfil\cr
				$\scriptstyle\vrule width0pt height.5ex\smash\rightarrow$\cr}}}}
\newcommand{\Rrelbar}{\mathrel{\raise.75ex\hbox{\oalign{%
				$\scriptstyle\relbar$\cr
				\vrule width0pt height.5ex$\scriptstyle\relbar$}}}}
\def\leftrightarrowsfill@{\arrowfill@\leftrarrows\Rrelbar\lrightarrows}
\newcommand{\xleftrightarrows}[2][]{\ext@arrow 3399\leftrightarrowsfill@{#1}{#2}}
\begin{document}

\title[On the $\A^1$-Euler characteristic of the variety of maximal tori]{On the $\A^1$-Euler characteristic of the variety of maximal tori in a reductive group}


\author{Alexey Ananyevskiy}
\address{St. Petersburg Department, Steklov Math. Institute, Fontanka 27, St. Petersburg 191023 Russia}
\email{alseang@gmail.com}


\date{}

\thanks{}

\begin{abstract}
We show that for a reductive group $G$ over a field $k$ the $\A^1$-Euler characteristic of the variety of maximal tori in $G$ is an invertible element of the Grothendieck-Witt ring $\GW(k)$, settling the weak form of a conjecture by Fabien Morel. As an application we obtain a generalized splitting principle which allows one to reduce the structure group of a Nisnevich locally trivial $G$-torsor to the normalizer of a maximal torus.
\end{abstract}

\maketitle

\section{Introduction}
The $\A^1$-Euler characteristic $\chi^{\A^1}$ is an invariant which assigns to a smooth algebraic variety over a field $k$ an element of the Grothendieck--Witt ring of symmetric bilinear forms $\GW(k)$. This invariant refines the classical topological Euler characteristic in the sense that for a smooth algebraic variety $X$ over $\CC$ we have 
\[
\chi^{\A^1}(X)=\chi^{\tp}(X(\CC))
\]
while for a smooth algebraic variety $X$ over $\RR$ we have 
\[
\rank \chi^{\A^1}(X)=\chi^{\tp}(X(\CC)), \quad \operatorname{sgn} \chi^{\A^1}(X)=\chi^{\tp}(X(\RR))
\]
where $\rank$ and $\operatorname{sgn}$ are the rank and signature of a symmetric bilinear form. For a smooth variety $X$ over an arbitrary field $k$ at the current moment there is no general recipe how to compute its $\A^1$-Euler characteristic, although for projective varieties there are formulas in the terms of Hodge cohomology \cite[Theorem~1.3]{LR20} or Hochschild homology \cite[Theorem~1]{ABOWZ20}.

Besides being an interesting invariant by itself, $\chi^{\A^1}$ may be used to study fiber bundles. Marc Levine introduced a variant of Becker--Gottlieb transfers \cite{Lev19} in the motivic stable homotopy category $\SH(k)$ and showed that for a Nisnevich locally trivial fibration $p\colon Y\to X$ with the fiber $F$ the transfer map $\mathrm{Tr}_p\colon \Sigma^\infty_{\T} X_+\to \Sigma^\infty_{\T} Y_+$ gives a splitting for the morphism $\Sigma^\infty_{\T} p_+\colon \Sigma^\infty_{\T} Y_+\to \Sigma^\infty_{\T} X_+$ provided that $\chi^{\A^1}(F)$ is an invertible element of $\GW(k)$. The case of $F=G/H$ being a homogeneous space is of particular interest because this result may be used to obtain a variant of the splitting principle allowing to reduce $G$-torsors to $H$-torsors without loss of cohomological information. In this setting Marc Levine stated the following conjecture, attributing the strong form to Fabien Morel.
\begin{conjecture}[{\cite[Conjecture~2.1]{Lev19}}]
	Let $G$ be a split reductive group over a perfect field $k$, $T\le G$ be a split maximal torus and $N=N_G(T)$ be the normalizer of $T$ in $G$. Then
	\begin{itemize}[itemindent=6em]
		\item [{\normalfont\textbf{[Strong form]}}] $\chi^{\A^1}(G/N)=1$,
		\item [{\normalfont\textbf{[Weak form]}}] $\chi^{\A^1}(G/N)$ is an invertible element of $\GW(k)$.
	\end{itemize}
\end{conjecture}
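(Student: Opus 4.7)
The plan is to exploit the finite étale Galois $W$-cover $\pi\colon G/T\to G/N$, where $W=N/T$ is the Weyl group, combined with a cellular computation of $\chi^{\A^1}(G/T)$. First I would pass from $G/T$ to the flag variety $G/B$ via the Zariski locally trivial affine-space bundle $G/T\to G/B$ with $\A^1$-contractible fiber the unipotent radical $U\subset B$, obtaining $\chi^{\A^1}(G/T)=\chi^{\A^1}(G/B)$ by $\A^1$-homotopy invariance. The Bruhat decomposition gives $G/B$ an affine paving by cells $\A^{\ell(w)}$ indexed by $w\in W$, and the cellular formula then yields $\chi^{\A^1}(G/B)=\sum_{w\in W}\langle(-1)^{\ell(w)}\rangle$ in $\GW(k)$. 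Since the map $w\mapsto sw$ for a fixed simple reflection $s$ is a fixed-point-free involution on $W$ which swaps the parity of $\ell(w)$, this sum evaluates to $\tfrac{|W|}{2}\cdot h$, where $h=\langle 1\rangle+\langle-1\rangle$ is the hyperbolic form; in particular $\chi^{\A^1}(G/T)$ lies in the principal ideal $h\cdot\GW(k)$.

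Next, to transfer information to $G/N$, I would apply the motivic transfer $\mathrm{Tr}_\pi$ for the finite étale cover $\pi$ (as developed by Hoyois and exploited by Levine). Combining the identity $\pi_+\circ\mathrm{Tr}_\pi=\chi^{\A^1}(\pi)\cdot\id$ in $\mathrm{End}(\Sigma^\infty_\T (G/N)_+)$ with cyclicity of the categorical trace and the motivic Lefschetz fixed-point formula applied to the free $W$-action on $G/T$, one obtains an identity
\[
\chi^{\A^1}(\pi)\cdot\chi^{\A^1}(G/N)=\chi^{\A^1}(G/T)=\tfrac{|W|}{2}\cdot h
\]
in $\GW(k)$, where $\chi^{\A^1}(\pi)\in\GW(k)$ is a transfer Euler class of rank $|W|$. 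Together with the classical rank datum $\rank\chi^{\A^1}(G/N)=\chi^{\tp}(G/N(\CC))=1$ and the standard fact that $q\in\GW(k)$ is invertible iff its rank lies in $\Z^\times=\{\pm 1\}$ and its image in $W(k)=\GW(k)/h$ is a unit, the weak form reduces to pinning down the Witt class of $\chi^{\A^1}(G/N)$.

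The main obstacle is that the right-hand side of the identity above vanishes modulo $h$, so the product relation alone does not separate the Witt contributions of $\chi^{\A^1}(\pi)$ and $\chi^{\A^1}(G/N)$. One would need a refined analysis of $\chi^{\A^1}(\pi)$ that exploits the reflection-group structure of $W$ --- for instance via the sign character of $W$ or the length-parity involution above --- to show that the Witt class of $\chi^{\A^1}(G/N)$ is a unit in $W(k)$. This quadratic subtlety is the technical heart of the argument and is precisely the reason that only the weak form of Morel's conjecture can be established along these lines.
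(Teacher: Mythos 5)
Your approach is genuinely different from the paper's, but you yourself identify a gap that the proposal does not close. The paper never relates $\chi^{\A^1}(G/N)$ to $\chi^{\A^1}(G/T)$ multiplicatively. Instead it uses the classical fact that $q\in\GW(k)$ is invertible iff $\rank q=\pm 1$ and all signatures of $q$ are $\pm 1$, and computes these invariants of $\chi^{\A^1}(G/N)$ separately via realization functors. The rank is handled by $\ell$-adic realization over $\overline{k}$: since $\mathrm{H}^*(G/T,\QQ_\ell)$ is the regular representation of $W$ (Borel presentation plus Chevalley's theorem), the Hochschild--Serre spectral sequence for the Galois cover $G/T\to G/N$ gives $\mathrm{H}^*(G/N,\QQ_\ell)=\QQ_\ell$ concentrated in degree $0$, so $\rank\chi^{\A^1}(G/N)=1$. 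For the signatures, the paper reduces to $k=\RR$ (semisimple groups over a real closed field descend to $\RR\cap\overline{\QQ}$, and $\GW$ is unchanged along such extensions), applies real Betti realization, and then decomposes $(G/N)(\RR)$ into $G(\RR)$-orbits indexed by $G(\RR)$-conjugacy classes of maximal tori; a Mostow-type compact deformation retract argument shows that exactly one orbit, the one whose torus has maximal compact rank, contributes $1$ while the rest contribute $0$. The paper thereby also removes the split and perfectness hypotheses.

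In your plan, the computation $\chi^{\A^1}(G/T)=\tfrac{|W|}{2}h$ via the cellular formula and the length-parity involution $w\mapsto sw$ is correct for split $G$. But the descent step is the gap, and the obstacle you raise kills it: since $\chi^{\A^1}(G/T)$ lies in $h\cdot\GW(k)$, your proposed transfer identity reduces modulo $h$ to the vacuous relation $\overline{\chi^{\A^1}(\pi)}\cdot\overline{\chi^{\A^1}(G/N)}=0$ in the Witt ring $\W(k)=\GW(k)/h\GW(k)$, which is precisely where the nontrivial part of the invertibility must be checked once the rank is known to be $1$. No refined analysis of $\chi^{\A^1}(\pi)$ is supplied, and it is not clear one is available: $\chi^{\A^1}(\pi)$ could itself be divisible by $h$, in which case the identity carries no Witt-ring information at all. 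I would also flag that the identity $\chi^{\A^1}(\pi)\cdot\chi^{\A^1}(G/N)=\chi^{\A^1}(G/T)$ needs justification as stated. Levine's formula $p_+\circ\mathrm{Tr}_p=\chi^{\A^1}(F)\cdot\mathrm{id}$ is for Nisnevich locally trivial fibrations, whereas $\pi$ is a nontrivial \'etale cover, and even granting an \'etale analogue the endomorphism $\pi_+\circ\mathrm{Tr}_\pi$ of $\Sigma^\infty_\T(G/N)_+$ need not be multiplication by a scalar in $\GW(k)$ without further argument. As written, the proposal is a plan with an acknowledged missing step, not a proof.
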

\noindent
In the same paper Levine proved the strong form of the conjecture for $G=\GL_2$ and $\SL_2$ over a perfect field of characteristic not $2$ \cite[Proposition~2.6]{Lev19} and the weak form for $G=\GL_n$ and $\SL_n$ over a field of characteristic zero \cite[Corollary~2.4]{Lev19}. Then the strong form of the conjecture was proved in \cite[Theorem~1.6]{JP20} assuming that $\sqrt{-1}\in k$ and in case of $\chark k=p\neq 0$ additionally assuming $\Z[\tfrac{1}{p}]$-coefficients. In the current paper we prove the weak form of the conjecture over an arbitrary field (assuming $\Z[\tfrac{1}{p}]$-coefficients in positive characteristic) and also for non-split reductive groups, obtaining the following theorem. 

\begin{theoremintr}[{Theorem~\ref{thm:main}}]
	Let $G$ be a reductive group over a field $k$, $T\le G$ be a maximal torus and $N=N_G(T)$ be the normalizer of $T$ in $G$. Then the following holds.
	\begin{enumerate}
		\item 
		If $\chark k=0$ then $\chi^{\A^1}(G/N)$ is a unit.
		\item 
		If $\chark k=p\neq 0$ then $\chi^{\A^1}_{\Z[\frac{1}{p}]}(G/N)$ is a unit.
		\item 
		If $\chark k=p\neq 0$ and $G/N$ is strongly dualizable then $\chi^{\A^1}(G/N)$ is a unit.
	\end{enumerate}	
\end{theoremintr}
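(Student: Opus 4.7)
The plan is to analyze $\chi^{\A^1}(G/N)$ by exploiting the finite \'etale Galois cover $\pi \colon G/T \to G/N$ with group $W = N/T$, together with the cellular structure of the flag variety $G/B$. I would first reduce to the split case by passing to a finite separable Galois extension $k'/k$ over which $G$ (and hence $T$ and $N$) split: compatibility of $\chi^{\A^1}$ with base change, together with the trace form $\GW(k') \to \GW(k)$, should let one recover invertibility in $\GW(k)$ from invertibility in $\GW(k')$, possibly after inverting $[k':k]$ (which in positive characteristic is already forced by the $\Z[\tfrac{1}{p}]$-coefficient hypothesis).

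In the split case, both $\chi^{\A^1}(G/T)$ and $\chi^{\A^1}(W_k)$ are directly computable. The projection $G/T \to G/B$ is a Zariski-locally trivial fibration with affine fiber $B/T$, so $\chi^{\A^1}(G/T) = \chi^{\A^1}(G/B)$; and $G/B$ admits the Bruhat stratification by cells $\A^{\ell(w)}$ indexed by $w \in W$, so iterated purity triangles should express $\chi^{\A^1}(G/B)$ explicitly in $\GW(k)$ in terms of $|W|$ and $\epsilon = -\langle -1 \rangle$. In the split case $W_k$ is a disjoint union of $|W|$ copies of $\Spec k$, so $\chi^{\A^1}(W_k) = |W| \cdot \langle 1 \rangle$.

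The Becker--Gottlieb transfer along $\pi$ should then yield a relation of the form $\chi^{\A^1}(G/T) = \chi^{\A^1}(W_k) \cdot \chi^{\A^1}(G/N)$ in $\GW(k)$, analogous to the topological identity $\chi(Y) = n\chi(X)$ for an $n$-sheeted cover. Combined with the computations above, this should identify $\chi^{\A^1}(G/N)$ as an explicit rank-$1$ element and exhibit it as a unit $\langle a \rangle \in \GW(k)$. The main obstacle I anticipate is that multiplication by $|W|$ is not in general injective on $\GW(k)$, so ``cancelling'' $\chi^{\A^1}(W_k)$ from the transfer relation requires additional structure. A promising refinement is to use the regular $\Gm$-action on $G/N$ arising from a regular cocharacter $\lambda \colon \Gm \to T$: its unique fixed point is $eN$ (any fixed coset $gN$ must satisfy $g^{-1} \lambda(\Gm) g \subset N^{\circ} = T$, forcing $g \in N$ by regularity of $\lambda$), and a Bia{\l}ynicki--Birula analysis of the attracting cell at this point should let one read off $\chi^{\A^1}(G/N)$ directly.

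Finally, the trichotomy in the theorem reflects the need for $G/N$ (or a smooth compactification) to be strongly dualizable in $\SHk$, so that $\chi^{\A^1}$ is defined via the symmetric monoidal trace and is compatible with the transfer machinery. In characteristic zero strong dualizability is automatic by Hironaka's resolution of singularities; in positive characteristic $p$ it is accessible after inverting $p$ via Gabber/de~Jong alterations; and case (3) takes it as a hypothesis.
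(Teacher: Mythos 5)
Your strategy diverges from the paper's in a fundamental way, and it has two serious gaps that I don't think can be repaired.

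\textbf{The transfer identity is false.} The relation
\[
\chi^{\A^1}(G/T) = \chi^{\A^1}(W_k)\cdot\chi^{\A^1}(G/N)
\]
does not hold in $\GW(k)$. Already for $G=\SL_2$ over a field of characteristic $\neq 2$: here $G/T\simeq G/B = \PP^1$, so $\chi^{\A^1}(G/T)=\langle 1\rangle+\langle -1\rangle$; the Weyl group is $\Z/2$, so $\chi^{\A^1}(W_k)=2\langle 1\rangle$; and Levine's computation gives $\chi^{\A^1}(G/N)=\langle 1\rangle$. But $2\langle 1\rangle\cdot\langle 1\rangle=\langle 1\rangle+\langle 1\rangle\neq \langle 1\rangle+\langle -1\rangle$ (their signatures over $\RR$ are $2$ and $0$). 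The source of the error is that the multiplicativity of $\chi^{\A^1}$ in a fibration (Levine's transfer argument) requires the fibration to be \emph{Nisnevich} locally trivial, and a nontrivial finite \'etale $W$-cover $G/T\to G/N$ is not. So the central identity your plan hinges on is not available, and the ``cancellation'' issue you flag is moot because there is no correct equation to cancel in.

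\textbf{The descent from $k'$ to $k$ does not preserve invertibility.} Passing to a splitting field $k'/k$ and using the trace form $\tr_{k'/k}\colon\GW(k')\to\GW(k)$ is not enough. If $u\cdot\chi^{\A^1}(G/N)_{k'}=1$ in $\GW(k')$, the projection formula gives $\tr_{k'/k}(u)\cdot\chi^{\A^1}(G/N)=\tr_{k'/k}(1)$, the trace form of $k'/k$. That form is generically not a unit, even after inverting $[k':k]$: e.g.\ for $\QQ(i)/\QQ$ the trace form has rank $2$ but signature $0$, so it is not invertible in $\GW(\QQ)[\tfrac12]$. Moreover part (1) of the theorem asserts invertibility in $\GW(k)$ itself, not in $\GW(k)[\tfrac{1}{[k':k]}]$, so even a repaired version of this step would prove less than claimed. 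The Bia{\l}ynicki--Birula refinement you sketch also has a gap: $G/N$ is affine (not proper), so the attracting cell of the unique $\Gm$-fixed point does not exhaust $G/N$, and a compactification with controlled boundary would be needed.

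\textbf{What the paper does instead.} Rather than splitting $G$, the paper uses the structural fact that an element of $\GW(k)$ is a unit iff its rank is $\pm 1$ and (when $k$ is formally real) its image in $\GW(k_v)\cong\Z[\epsilon]/(\epsilon^2-1)$ is a unit for each real closure $k_v$; the kernel of the rank-plus-signatures map is the nilradical (\cite[Theorem~V.8.9]{Bae78}, \cite[Ch.~2, Thm.~7.10]{Sch85}). The rank is computed by base-change to $\overline{k}$ and $\ell$-adic realization, where the Hochschild--Serre spectral sequence for $G/T\to G/N$ plus the Borel description of $\mathrm{H}^*(G/B,\QQ_\ell)$ as the regular representation of $W$ gives $\mathrm{H}^*(G/N,\QQ_\ell)=\QQ_\ell$. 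The signatures are computed after reduction to $k=\RR$ (using Grosshans' descent of semisimple groups to $\RR^{\mathrm{alg}}$), via $\RR$-Betti realization: $(G/N)(\RR)$ is a finite disjoint union of orbits $G(\RR)/N_i(\RR)$ indexed by $G(\RR)$-conjugacy classes of maximal tori, and a compact-group/Weyl-group argument shows exactly one orbit (the one whose torus has maximal compact rank) contributes $1$ to $\chi^{\tp}$ while the others contribute $0$. This sidesteps both the non-Nisnevich-local-triviality of $G/T\to G/N$ and the trace-form descent problem entirely.
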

\noindent
Note that contrary to the original conjecture we do not assume $k$ to be perfect or $G$ to be split in the above theorem.

Combining this with the motivic Becker--Gottlieb transfers we obtain the following corollary.

\begin{corollaryintr}[{Corollary~\ref{cor:splitting}}]
	Let $k$ be a field and $\mathscr{G}\to X$ be a Nisnevich locally trivial $G$-torsor where $X\in \Smk$ and $G$ is a reductive group over $k$. Let $T\le G$ be a maximal torus and $N=N_G(T)$ be the normalizer of $T$ in $G$. Let $A^{*,*}(-)$ be a cohomology theory representable in $\SHk$ and suppose that one of the following holds:
	\begin{itemize}
		\item 
		$\chark k = 0$,
		\item 
		$\chark k = p\neq 0$ and $G/N$ is strongly dualizable or $A^{*,*}(X)$ has no $p$-torsion.
	\end{itemize}
	Then there exists $Y\in \Smk$ and a morphism $p\colon Y\to X$ such that
	\begin{enumerate}
		\item
		the $G$-torsor $p^*\mathscr{G}$ is induced from an $N$-torsor, 
		\item 
		the pullback homomorphism $p^A\colon A^{*,*}(X)\to A^{*,*}(Y)$ is split injective.
	\end{enumerate}
\end{corollaryintr}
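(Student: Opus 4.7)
The plan is to realise the reduction of structure group via the associated bundle construction and then derive the cohomological splitting from Levine's motivic Becker--Gottlieb transfer together with the main theorem.

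First, I set $Y = \mathscr{G} \times^G (G/N)$, the quotient of $\mathscr{G} \times (G/N)$ by the diagonal $G$-action. Nisnevich local triviality of the $G$-torsor $\mathscr{G} \to X$ passes to the projection $p\colon Y \to X$, making it a Nisnevich locally trivial smooth fibration with fiber $G/N$; in particular $Y \in \Smk$. The quotient map $\mathscr{G} \to \mathscr{G}/N = Y$ is naturally an $N$-torsor, and the associated $G$-torsor $\mathscr{G} \times^N G \to Y$ is canonically isomorphic to $p^*\mathscr{G}$, delivering assertion~(1).

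Second, I invoke Levine's motivic Becker--Gottlieb transfer \cite{Lev19} for the Nisnevich locally trivial smooth fibration $p$ with fiber $F = G/N$, producing a morphism $\mathrm{Tr}_p\colon \Sigma^\infty_\T X_+ \to \Sigma^\infty_\T Y_+$ in $\SHk$ such that
\[
\Sigma^\infty_\T p_+ \circ \mathrm{Tr}_p = \chi^{\A^1}(G/N) \cdot \id_{\Sigma^\infty_\T X_+},
\]
where $\chi^{\A^1}(G/N) \in \GW(k)$ acts via the unit map. Evaluating at the representable theory $A^{*,*}$ then yields a transfer homomorphism $\mathrm{Tr}_p^A\colon A^{*,*}(Y) \to A^{*,*}(X)$ satisfying $\mathrm{Tr}_p^A \circ p^A = \chi^{\A^1}(G/N) \cdot \id_{A^{*,*}(X)}$, where the right-hand side is multiplication through the canonical $\GW(k)$-module structure on $A^{*,*}(X)$.

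Third, I produce the required retraction by inverting $\chi^{\A^1}(G/N)$ on $A^{*,*}(X)$. In characteristic zero (part~(1) of the theorem) or in positive characteristic with $G/N$ strongly dualizable (part~(3)), $\chi^{\A^1}(G/N)$ is already a unit in $\GW(k)$, so the composition $\chi^{\A^1}(G/N)^{-1} \cdot \mathrm{Tr}_p^A$ immediately retracts $p^A$. In positive characteristic $p$ with $A^{*,*}(X)$ free of $p$-torsion, part~(2) gives invertibility only in $\GW(k)[\tfrac{1}{p}]$; a relation of the form $p^M \chi^{\A^1}(G/N) \beta = p^{M+N}$ in $\GW(k)$, combined with the cancellation of $p^M$ afforded by the $p$-torsion freeness on $A^{*,*}(X)$, allows one to promote invertibility on $A^{*,*}(X)$ and extract the splitting. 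The core input, invertibility of $\chi^{\A^1}(G/N)$, is already delivered by the main theorem, so the remaining obstacle is largely bookkeeping: one must verify that Levine's transfer formalism genuinely applies to our Nisnevich locally trivial smooth fibration $p$ and that the transfer constant produced is exactly $\chi^{\A^1}(G/N)$ viewed as an element of $\GW(k)$.
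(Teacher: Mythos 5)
Your construction matches the paper's: you take $Y=\mathscr{G}\times^G(G/N)=\mathscr{G}/N$, observe that $\mathscr{G}\to Y$ is the required $N$-torsor, and note that $p\colon Y\to X$ is a Nisnevich locally trivial fibration with fiber $G/N$, at which point Levine's Becker--Gottlieb transfer together with Theorem~\ref{thm:main} yields the splitting. The paper simply cites \cite[Theorem~1.10]{Lev19} for the transfer-plus-invertibility argument; you instead try to unwind that citation, which is where the gap in your third paragraph lies.

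Specifically, in characteristic $p$ with $G/N$ not assumed strongly dualizable, the transfer $\mathrm{Tr}_p$ is a morphism in $\SHk[\tfrac{1}{p}]$, not in $\SHk$, because its construction requires strong dualizability and Theorem~\ref{thm:dualizable}(2) only supplies that after inverting $p$. Consequently evaluating on $A^{*,*}$ a priori gives a retraction of the localized map $A^{*,*}(X)[\tfrac{1}{p}]\to A^{*,*}(Y)[\tfrac{1}{p}]$ rather than of $p^A$ itself, and $p$-torsion freeness of $A^{*,*}(X)$ directly gives only that $p^A$ is injective, not that it is \emph{split} injective. Your sketch of ``a relation $p^M\chi^{\A^1}(G/N)\beta=p^{M+N}$ plus cancellation of $p^M$'' does not obviously produce a retraction $A^{*,*}(Y)\to A^{*,*}(X)$ landing in the unlocalized group. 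Theorem~1.10 of \cite{Lev19} is stated so as to absorb exactly this issue, and the cleanest repair of your write-up is to invoke it as the paper does rather than re-derive it; otherwise you need to argue that the transfer map on $A$-cohomology is defined integrally (e.g.\ because $E\wedge \Sigma^\infty_\T(G/N)_+$ is dualizable over $E$ whenever $A^{*,*}$ has no $p$-torsion), which is the actual content you are glossing over.
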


\noindent This corollary may be seen as a generalized splitting principle. In particular, for $G=\GL_n$ the statement that a $G$-torsor is induced from an $N$-torsor corresponds to the statement that the vector bundle associated to the $G$-torsor admits a decomposition into a direct sum of line bundles on an $S_n$-Galois covering of the base with the structure group permuting the summands.

The proof of Theorem~\ref{thm:main} goes as follows. First we recall that the classical theory of symmetric bilinear forms yields that an element of the Grothendieck--Witt ring is invertible if and only if its rank and all of its signatures are units, so it is sufficient to treat the cases of algebraically closed and real closed fields. The rank of the $\A^1$-Euler characteristic can be computed using $\ell$-adic cohomology and the computation of $\mathrm{H}^*(G/N,\QQ_\ell)$ follows from the classical Borel description of the cohomology of $G/B$ with $B\le G$ being a Borel subgroup. In order to compute the signatures we reduce the problem to the case of $k=\RR$ using the fact that the classification of semisimple groups is the same over all real closed fields. Over $\RR$ one can use the topological Euler characteristic of the real Betti realization to compute the signature of the $\A^1$-Euler characteristic, so we study the real points of the variety $G/N$. They decompose as
\[
(G/N)(\RR)=\bigsqcup_i G(\RR)/N_i(\RR)
\]
where the $N_i=N_G(T_i)$ are the normalizers of the non-conjugate maximal tori $T_i\le G$. Then we show that 
\[
\chi^\tp(G(\RR)/N_i(\RR))=\left\{\begin{array}{ll}
	0, & \crk T_i\text{ is not maximal,} \\
	1, & \crk T_i\text{ is maximal}.
\end{array}  \right.
\]
Here $\crk T_i$ denotes the dimension of the maximal compact torus in $T_i(\RR)$ which equals to the dimension of the anisotropic part of $T_i$. Moreover, it can be shown that $\crk T_i$ is maximal only for one $i$ and this yields the claim of the theorem.

The paper is organized as follows. In Section~\ref{section:preliminary} we recall the notion of the $\A^1$-Euler characteristic and show that its invertibility may be detected by looking at the algebraic closure and real closures of the base field. In the next section we compute the rank of $\chi^{\A^1}(G/N)$ using $\ell$-adic cohomology. In Section~\ref{section:signature} we study the real points of $G/N$ and compute the signature of $\chi^{\A^1}(G/N)$. In the last section we prove the main theorem using all the previous computations and derive a generalized splitting principle.

{\it Acknowledgment.} I would like to thank the participants of the conference "Motivic Geometry" held online in September 2020 who found a gap in the previous version of the proof in the positive characteristic and Anastasia Stavrova for answering many questions on the general theory of reductive groups. I would also like to thank the referees for the careful reading of the paper and for all the suggestions that hopefully clarified the exposition. The author is a winner of the contest “Young Russian Mathematics”. The research was supported by the Russian Science Foundation grant 20-41-04401.

Throughout the paper we employ the following assumptions and notations.

\begin{tabular}{l|l}
	$\Smk$ & the category of smooth varieties over a field $k$\\
	$\SHk$ & the motivic stable homotopy category over a field $k$\\		
	$X_K$ & $X\times_{\Spec k}\Spec K$ for $X\in\Smk$ and a field extension $K/k$\\
	$X(\RR)$ & the $\RR$-points of $X$ viewed as a topological space with Euclidean topology \\
	$G$ & a reductive group over a field $k$\\		
	$T$ & a maximal torus in a reductive group $G$\\			
	$N=N_G(T)$ & the normalizer of the maximal torus $T$ in a reductive group $G$
\end{tabular}

\section{Preliminaries on the $\A^1$-Euler characteristic} \label{section:preliminary}

\begin{definition}[{\cite[\S 1]{DP84}}]
	Let $(\mc{C},\wedge,\tensorunit_{\mc{C}})$ be a symmetric monoidal category. $X\in \mc{C}$ is called \textit{strongly dualizable} if there exists $X^{\vee}\in\mc{C}$ and morphisms
	\[
	\mathrm{coev}\colon \tensorunit_{\mc{C}} \to X\wedge X^{\vee},\quad
	\mathrm{ev}\colon X^{\vee}\wedge X \to \tensorunit_{\mc{C}} 
	\]
	such that both compositions
	\begin{gather*}
		X \simeq \tensorunit_{\mc{C}} \wedge X \xrightarrow{\mathrm{coev}\wedge \id} X\wedge X^{\vee} \wedge X \xrightarrow{\id\wedge \mathrm{ev}} X\wedge\tensorunit_{\mc{C}} \simeq X, \\
		X^{\vee} \simeq  X^{\vee}\wedge \tensorunit_{\mc{C}} \xrightarrow{\id\wedge \mathrm{coev}} X^{\vee} \wedge X \wedge X^{\vee} \xrightarrow{\mathrm{ev} \wedge \id} \tensorunit_{\mc{C}} \wedge X^{\vee} \simeq X^{\vee}	
	\end{gather*}
	are equal to the identity. If $X$ is strongly dualizable then $X^{\vee}$, $\mathrm{coev}$ and $\mathrm{ev}$ are unique up to a unique isomorphism {\cite[\S 1]{DP84}}.
\end{definition}

\begin{definition}[{\cite[Defnition~4.1]{M01}}]
	Let $(\mc{C},\wedge,\tensorunit_{\mc{C}})$ be a symmetric monoidal category and let $X\in \mc{C}$ be strongly dualizable. The \textit{Euler characteristic} $\chi^{\mc{C}}(X)$ of $X$ is the composition
	\[
	\tensorunit_{\mc{C}} \xrightarrow{\mathrm{coev}} X\wedge X^{\vee} \xrightarrow{\simeq} X^{\vee} \wedge X \xrightarrow{\mathrm{ev}} \tensorunit_{\mc{C}}.
	\]
\end{definition}

\begin{remark}
	For the classical stable homotopy category $\SH$ and a topological space $X$ of finite homotopy type one has
	\[
	\chi^{\SH}(\Sigma^\infty_{S^1} X_+)= \chi^{\tp}(X) = \sum_{i} (-1)^i \dim \Ht^i(X,\QQ).
	\]
	This follows, for example, from Lemma~\ref{lem:chi_natural} below using the linearization functor $\SH\to \mathrm{D}(\QQ)$.
\end{remark}

\begin{lemma}[{\cite[Proposition~4.2]{M01}}] \label{lem:chi_natural}
	Let $F\colon (\mc{C},\wedge,\tensorunit_{\mc{C}}) \to (\widetilde{\mc{C}},\widetilde{\wedge},\tensorunit_{\widetilde{\mc{C}}})$ be a symmetric monoidal functor and let $X\in\mc{C}$ be strongly dualizable. Then $F(X)\in \widetilde{\mc{C}}$ is strongly dualizable and $\chi_{\widetilde{\mc{C}}}(F(X))=F(\chi_{\mc{C}}(X))$.
\end{lemma}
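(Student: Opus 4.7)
The plan is to transport the duality data across $F$ and verify the axioms directly from the symmetric monoidal structure of $F$. Write $\phi\colon F(-)\widetilde{\wedge} F(-)\rightiso F(-\wedge -)$ and $\phi_0\colon \tensorunit_{\widetilde{\mc{C}}}\rightiso F(\tensorunit_{\mc{C}})$ for the coherence isomorphisms making $F$ symmetric monoidal. The natural candidate for a dual of $F(X)$ is $F(X)^\vee := F(X^\vee)$, equipped with evaluation and coevaluation
\[
\widetilde{\mathrm{coev}}\colon \tensorunit_{\widetilde{\mc{C}}} \xrightarrow{\phi_0} F(\tensorunit_{\mc{C}}) \xrightarrow{F(\mathrm{coev})} F(X\wedge X^\vee) \xrightarrow{\phi^{-1}} F(X)\widetilde{\wedge}F(X^\vee),
\]
\[
\widetilde{\mathrm{ev}}\colon F(X^\vee)\widetilde{\wedge}F(X) \xrightarrow{\phi} F(X^\vee\wedge X) \xrightarrow{F(\mathrm{ev})} F(\tensorunit_{\mc{C}}) \xrightarrow{\phi_0^{-1}} \tensorunit_{\widetilde{\mc{C}}}.
\]

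The first step is to check the two triangle identities for $(F(X),F(X^\vee),\widetilde{\mathrm{coev}},\widetilde{\mathrm{ev}})$. The point is that each composite expressing a triangle identity can be rewritten, using naturality and coherence of $\phi$ and $\phi_0$ and the compatibility of $F$ with associators, unitors and the braiding, as $F$ applied to the corresponding triangle identity for $X$ in $\mc{C}$, with an outer conjugation by coherence isomorphisms. Since the latter is the identity by hypothesis and $F$ preserves identities, the resulting map is the identity of $F(X)$ (respectively of $F(X^\vee)$). This establishes that $F(X)$ is strongly dualizable.

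The second step is the identification of Euler characteristics. By definition $\chi_{\widetilde{\mc{C}}}(F(X))$ is the composite
\[
\tensorunit_{\widetilde{\mc{C}}}\xrightarrow{\widetilde{\mathrm{coev}}} F(X)\widetilde{\wedge}F(X^\vee)\xrightarrow{\simeq} F(X^\vee)\widetilde{\wedge}F(X)\xrightarrow{\widetilde{\mathrm{ev}}}\tensorunit_{\widetilde{\mc{C}}},
\]
where the middle symmetry is the braiding in $\widetilde{\mc{C}}$. Unwinding the definitions of $\widetilde{\mathrm{coev}}$ and $\widetilde{\mathrm{ev}}$, and using that $F$ intertwines the two braidings via $\phi$, the inner $\phi^{-1}$ and $\phi$ cancel after commuting the braiding past them, and the composite collapses to
\[
\tensorunit_{\widetilde{\mc{C}}}\xrightarrow{\phi_0} F(\tensorunit_{\mc{C}}) \xrightarrow{F(\chi_{\mc{C}}(X))} F(\tensorunit_{\mc{C}}) \xrightarrow{\phi_0^{-1}} \tensorunit_{\widetilde{\mc{C}}},
\]
which is $F(\chi_{\mc{C}}(X))$ under the canonical identification $\mathrm{End}_{\widetilde{\mc{C}}}(\tensorunit_{\widetilde{\mc{C}}})\cong \mathrm{End}_{\widetilde{\mc{C}}}(F(\tensorunit_{\mc{C}}))$ given by $\phi_0$.

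The only subtle point is the bookkeeping of coherence isomorphisms in the triangle identity and in the collapse of the braided composite; this is purely diagrammatic and is the single obstacle worth watching, but it is forced by Mac Lane's coherence theorem for symmetric monoidal functors, so no genuine calculation is required.
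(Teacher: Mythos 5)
The paper offers no proof of this lemma, citing \cite[Proposition~4.2]{M01} instead; your argument is precisely the standard proof found there — transport the duality data of $X$ across $F$ via the monoidal structure maps $\phi$ and $\phi_0$, verify the triangle identities by coherence for monoidal functors, and collapse the braided composite using the compatibility of $F$ with the symmetries so that $\phi\circ\tau\circ\phi^{-1}=F(\tau')$. The argument is correct and matches the cited reference.
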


\begin{definition} \label{def:dual}
	Let $k$ be a field and $\SHk$ be the motivic stable homotopy category over $k$ \cite{MV99,V98}. We say that $X\in\Smk$ is
	\begin{itemize}
		\item \textit{strongly dualizable} if the suspension spectrum $\Sigma^\infty_{\T} X_+$ is strongly dualizable in $\SHk$,
		\item \textit{strongly dualizable with $\Z[\frac{1}{p}]$-coefficients} if $\Sigma^\infty_{\T} X_+$ is strongly dualizable in $\SH(k)[\frac{1}{p}]$, where $\SH(k)[\frac{1}{p}]$ is the respective localization of $\SHk$.
	\end{itemize}
\end{definition}

\begin{definition}
	Let $k$ be a field and $\GW(k)$ be the Grothendieck-Witt ring of $k$ that is the Grothendieck group of the semi-ring of
isometry clasess of non-degenerate symmetric bilinear forms over $k$. Below we will freely use the natural isomorphism of rings
	\[
	\Hom_{\SHk}(\Sigma^\infty_{\T} (\Spec k)_+,\Sigma^\infty_{\T} (\Spec k)_+)\simeq \GW(k)
	\]
	given by \cite[Corollary~1.25]{Mor12} and \cite[Theorem~6.4.1]{Mor04}. For a strongly dualizable $X\in\Smk$ we put 
	\[
	\chi^{\A^1}(X)=	\chi^{\SHk}(\Sigma^\infty_{\T} X_+) \in \GW(k)
	\]
	and refer to it as \textit{$\A^1$-Euler characteristic} of $X$.	For $X\in\Smk$ that is strongly dualizable with \mbox{$\Z[\frac{1}{p}]$-coefficients} we put 
	\[
	\chi^{\A^1}_{\Z[\frac{1}{p}]}(X)=	\chi^{\SHk[\frac{1}{p}]}(\Sigma^\infty_{\T} X_+) \in \GW(k)[\tfrac{1}{p}].
	\]
	We say that \textit{$\chi^{\A^1}(X)$ is a unit (resp. $\chi_{\Z[\frac{1}{p}]}^{\A^1}(X)$ is a unit)} if it is invertible as an element of the ring $\GW(k)$ (resp. $\GW(k)[\tfrac{1}{p}]$).	
\end{definition}

\begin{theorem} \label{thm:dualizable}
	For a field $k$ and $X\in\Smk$ we have the following.
	\begin{enumerate}
		\item \cite[Theorem~1.4]{Rio05} If $\operatorname{char} k=0$ then $X$ is strongly dualizable.
		\item \cite[Theorem~3.2.1]{EK20} If $\operatorname{char} k=p\neq 0$ then $X$ is strongly dualizable with $\Z[\frac{1}{p}]$-coefficients.		
	\end{enumerate}
\end{theorem}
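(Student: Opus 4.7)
The plan is to establish a form of motivic Atiyah--Spanier--Whitehead duality for smooth varieties in $\SHk$: for smooth $X$ one expects $\Sigma^\infty_{\T} X_+$ to be Spanier--Whitehead dual to the Thom spectrum $\Th(-T_X)$ of the virtual normal bundle of $X$, and in particular strongly dualizable. I would attack this in three stages.

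First I would handle the case of smooth projective $X$. Using the six-functor formalism on $\SHk$ (or, equivalently, embedding $X$ into some $\PP^N$, invoking the explicit Atiyah duality for $\PP^N$ together with the tangent--normal sequence), one constructs coevaluation and evaluation maps identifying $\Th(-T_X)$ as the Spanier--Whitehead dual of $\Sigma^\infty_{\T} X_+$ and checks the triangle identities. The coevaluation here factors through the diagonal $X\to X\times X$ composed with a Gysin/umkehr map provided by the smoothness of $X$ and its properness over $\Spec k$. This yields the base case and gives the dualizability of every smooth projective variety.

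Second, for a general smooth $X$ in characteristic zero, I would reduce to the projective case by compactification. Hironaka's resolution of singularities provides a smooth compactification $X\hookrightarrow \overline{X}$ whose boundary $D=\bigcup_i D_i$ is a strict normal crossings divisor. A Mayer--Vietoris/localization argument then expresses $\Sigma^\infty_{\T} X_+$ as a finite colimit built from the suspension spectra of $\overline{X}$ and of the proper intersections $D_{i_1}\cap\cdots\cap D_{i_r}$. Each of these is smooth projective, hence strongly dualizable by the first step, and strong dualizability is closed under finite (co)limits and retracts in the stable category $\SHk$. This is the strategy realised in \cite{Rio05} and gives part~(1).

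Third, in positive characteristic $p$ the principal obstacle is precisely the absence of resolution of singularities. Following \cite{EK20}, the workaround is to invert $p$ and replace resolutions by alterations: Gabber's refinement of de Jong supplies, for any variety $Y$, a proper surjective map $\widetilde{Y}\to Y$ with $\widetilde{Y}$ smooth and generic degree prime to $p$. In $\SHk[\tfrac{1}{p}]$ such a map becomes a split epimorphism on suspension spectra via a trace/transfer argument, so strong dualizability descends from a smooth alteration of a compactification to the original smooth $X$; combined with Noetherian induction on the singular locus of a compactification, one obtains part~(2). The hard step throughout is this last one: forcing alterations to play the role of resolutions is what compels the inversion of the characteristic, and it is exactly why (2) can only be stated with $\Z[\tfrac{1}{p}]$-coefficients.
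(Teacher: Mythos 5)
The paper proves Theorem~\ref{thm:dualizable} purely by citation to \cite{Rio05} and \cite{EK20}, so there is no in-paper argument to compare against; your sketch is a faithful summary of what those references actually do. For (1) you have correctly reconstructed Riou's strategy: Atiyah duality for smooth projective varieties (dual $\simeq \Th(-T_X)$), Hironaka compactification with strict normal crossings boundary, and an induction through the localization cofiber sequences $\Sigma^\infty_\T U_+\to\Sigma^\infty_\T X_+\to \Th(N_{Z/X})$ using that strong dualizability is stable under cofiber sequences, retracts, and twists by Thom spectra of vector bundles. For (2) your description of the core mechanism (Gabber's prime-to-$p$ alterations plus a transfer argument making the alteration a split epimorphism after inverting $p$, then Noetherian induction) is the right picture, but one thing worth adding is that this machinery was essentially available over \emph{perfect} fields before \cite{EK20}; the specific point of Elmanto--Khan's Theorem~3.2.1, and the reason the paper cites it rather than an earlier source, is that they remove the perfectness hypothesis via the equivalence $\SH(k)[\tfrac{1}{p}]\simeq\SH(k^{\mathrm{perf}})[\tfrac{1}{p}]$ induced by the topological invariance of the Nisnevich site under the purely inseparable extension $k\hookrightarrow k^{\mathrm{perf}}$, reducing to the perfect case where the alteration argument you outline applies.
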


\begin{definition}
	Let $k$ be a field. We say that $k$ is \textit{formally real} if $-1$ is not a sum of squares in $k$. We say that $k$ is \textit{real closed} if $k$ is formally real and for every algebraic extension $K/k$ with $K$ being formally real one has $k=K$.
\end{definition}

\begin{lemma} \label{lem:rankunit}
	Let $k$ be a field and $X\in\Smk$. Suppose that $k$ is not formally real.
	\begin{enumerate}
		\item
		If $\chark k= 0$ and $\rank \chi^{\A^1}(X)=\pm 1$ then $\chi^{\A^1}(X)$ is a unit.
		\item 
		If $\chark k=p\neq 0$ and $\rank \chi_{\Z[\frac{1}{p}]}^{\A^1}(X)=\pm 1$ then $\chi^{\A^1}_{\Z[\frac{1}{p}]}(X)$ is a unit.
		\item
		If $\chark k=p\neq 0$, $\rank \chi_{\Z[\frac{1}{p}]}^{\A^1}(X)=\pm 1$ and $X$ is strongly dualizable then $\chi^{\A^1}(X)$ is a unit.		
	\end{enumerate}	
\end{lemma}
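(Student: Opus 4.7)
The plan is to reduce all three parts to the classical algebraic fact that, when $k$ is not formally real, an element $\eta\in\GW(k)$ is a unit if and only if $\rank\eta=\pm 1$ (with the analogous statement in $\GW(k)[\tfrac{1}{p}]$ being that $\rank\eta$ is a unit of $\Z[\tfrac{1}{p}]$).

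To justify this fact, I would invoke the Leicht--Lorenz description of $\Spec\W(k)$: for $k$ without any ordering, the fundamental ideal $I(k)$ is the unique prime of $\W(k)$, so $\W(k)$ is a local ring with residue field $\Z/2$. The short exact sequence $0\to\Z\cdot\la 1,-1\ra\to\GW(k)\to\W(k)\to 0$ then yields a pullback square
\[
\xymatrix{\GW(k)\ar[r]\ar[d]_{\rank} & \W(k)\ar[d] \\ \Z \ar[r] & \Z/2}
\]
identifying $\GW(k)$ with $\Z\times_{\Z/2}\W(k)$. An element of such a pullback is a unit iff its image in each factor is a unit; invertibility of the rank forces $\rank\eta=\pm 1$, and the Witt class $[\eta]\in\W(k)$ is then automatically a unit since anything of odd rank lies outside the unique maximal ideal $I(k)$. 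The same reasoning applied after inverting $p$ gives the $\Z[\tfrac{1}{p}]$-variant.

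Given this, parts (1) and (2) follow immediately from Theorem~\ref{thm:dualizable}, which guarantees that $\chi^{\A^1}(X)$ (respectively $\chi^{\A^1}_{\Z[\frac{1}{p}]}(X)$) is defined. For part (3), strong dualizability of $X$ in $\SHk$ makes $\chi^{\A^1}(X)\in\GW(k)$ well defined, and Lemma~\ref{lem:chi_natural} applied to the monoidal localization $\SHk\to\SHk[\tfrac{1}{p}]$ identifies its image in $\GW(k)[\tfrac{1}{p}]$ with $\chi^{\A^1}_{\Z[\frac{1}{p}]}(X)$. The rank of this image equals $\rank\chi^{\A^1}(X)$ viewed in $\Z[\tfrac{1}{p}]$; since $\Z\hookrightarrow\Z[\tfrac{1}{p}]$ is injective and the image is $\pm 1$, we conclude $\rank\chi^{\A^1}(X)=\pm 1$ already in $\Z$, and the classical fact applied in $\GW(k)$ finishes the argument.

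I expect the only real obstacle to be book-keeping in characteristic $2$, where symmetric bilinear forms are no longer equivalent to quadratic forms and the Leicht--Lorenz statement needs minor adjustment; this can be handled by a direct argument or by appealing to the relevant structural results on $\W(k)$ in that case.
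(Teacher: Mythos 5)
Your proposal is correct and lands in the same place as the paper, but by a slightly longer route. The paper simply cites Baeza [Bae78, Thm.~V.8.9] for the fact that when $k$ has no ordering the fundamental ideal $I(k)=\ker(\rank)$ is the nilradical of $\GW(k)$, and then observes that any element of rank $\pm 1$ has the form $\pm 1 + q$ with $q$ nilpotent, hence is a unit; the $\Z[\tfrac{1}{p}]$-variants follow by localizing (nilradicals commute with localization) and by noting that $\Z\hookrightarrow\Z[\tfrac{1}{p}]$ is injective for part (3), exactly as you argue. You instead invoke the Leicht--Lorenz description of $\Spec\W(k)$ together with the pullback square $\GW(k)\cong\Z\times_{\Z/2}\W(k)$: this is a valid and essentially equivalent package (given the square, ``$\W(k)$ local with nilpotent maximal ideal $I(k)$'' is equivalent to ``$I(k)$ is the nilradical of $\GW(k)$''), but it introduces an extra structural ingredient, namely the fiber-product presentation, that the paper avoids. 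A couple of points worth tightening: when you pass to $\Z[\tfrac{1}{p}]$ the pullback square localizes correctly, but the cases $p=2$ and $p$ odd behave differently (for $p=2$ one finds $\W(k)[\tfrac{1}{2}]=0$ since $2\in I(k)$ is nilpotent, collapsing the square to $\GW(k)[\tfrac{1}{2}]\cong\Z[\tfrac{1}{2}]$), so ``the same reasoning after inverting $p$'' deserves a sentence. And your caveat about characteristic $2$ is real: the Leicht--Lorenz statement is usually phrased for $\chark k\neq 2$, and the kernel of $\GW(k)\to\W(k)$ in characteristic $2$ is generated by metabolic rather than hyperbolic forms, so the pullback square needs justification there. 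The Baeza reference the paper uses sidesteps this because it is stated for symmetric bilinear forms over (semi-)local rings in all characteristics; if you prefer your route, you should either restrict the Leicht--Lorenz step to $\chark\neq 2$ and handle $\chark 2$ separately, or cite a source that covers the bilinear Witt ring in characteristic $2$.
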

\begin{proof}
	Since $k$ is not formally real then it follows from \cite[Theorem~V.8.9]{Bae78} combined with \cite[Lemma~V.7.7 and~Theorem~V.7.8]{Bae78} that the fundamental ideal 
	\[
	I(k)=\ker \left(\GW(k)\xrightarrow{\rank} \Z\right)=\ker \left(\mathrm{W}(k)\xrightarrow{\overline{\rank}} \Z/2\Z\right)
	\]
	is the nilradical of $\GW(k)$. Then in the case of $\chark k=0$ we have $\chi^{\A^1}(X)=\pm 1 + q$ with $q = \chi^{\A^1}(X) - \pm 1 \in I(k)$ being nilpotent whence the claim. If $\chark k=p\neq 0$ then $\chi_{\Z[\frac{1}{p}]}^{\A^1}(X)=\pm 1 + q$ for the same nilpotent $q\in I(k)[\tfrac{1}{p}]$ and the claim of item (2) also follows. If $X$ is strongly dualizable then Lemma~\ref{lem:chi_natural} yields $\rank \chi^{\A^1}(X)=\rank \chi_{\Z[\frac{1}{p}]}^{\A^1}(X)=\pm 1$ and the same reasoning applies.
\end{proof}

\begin{definition}
	Let $k=\overline{k}$ be an algebraically closed field and $\ell\neq \chark k$ be a prime. We denote
	\[
	\mathrm{Re}_{\QQ_\ell}\colon \SH(k) \to \mathrm{D}(\QQ_\ell)
	\]
	the contravariant \textit{$\ell$-adic realization functor} such that 
	\begin{enumerate}
		\item
		$\mathrm{Re}_{\QQ_\ell}$ is symmetric monoidal,
		\item
		for the total cohomology functor $\mathrm{D}(\QQ_\ell) \xrightarrow{\mathrm{H}^*} \Vect^*(\QQ_\ell)$, where $\Vect^*(\QQ_\ell)$ is the category of $\Z$-graded $\QQ_\ell$-vector spaces, the composition
		\[
		\Smk \xrightarrow{\Sigma^\infty_{\T}(-)_+} \SHk \xrightarrow{\mathrm{Re}_{\QQ_\ell}} \mathrm{D}(\QQ_\ell) \xrightarrow{\mathrm{H}^*} \Vect^*(\QQ_\ell)
		\]
		is given by the $\ell$-adic cohomology functor
		\[
		\mathrm{H}^*\left(\mathrm{Re}_{\QQ_\ell}\Sigma^\infty_{\T}(-)_+\right)\simeq 	\mathrm{H}^*(-,\QQ_\ell)=\varprojlim_n \mathrm{H}^*_{\et}(-,\Z/\ell^n\Z)\otimes_{\Z}\QQ.
		\]
	\end{enumerate}	
	In view of \cite[Corollary~2.39]{Rob15} (and standard properties of $\ell$-adic cohomology) these properties define the realization functor $\mathrm{Re}_{\QQ_\ell}$. Alternatively, $\mathrm{Re}_{\QQ_\ell}$ can be defined using the \etale realization functors of \cite{Ayo14} or \cite{CD16} combined with the appropriate change of topology and linearization functors.
\end{definition}

\begin{lemma} \label{lem:ladicrank}
	Let $k$ be a field, $\overline{k}$ its algebraic closure, $\ell\neq \chark k$ a prime and $X\in \Smk$.
	\begin{enumerate}
		\item 
		If $\chark k = 0$ then $\rank \chi^{\A^1}(X)= \sum_{i}  (-1)^i \dim_{\QQ_\ell} \mathrm{H}^i(X_{\overline{k}},\QQ_\ell)$.
		\item
		If $\chark k = p\neq 0$ then $\rank \chi^{\A^1}_{\Z[\frac{1}{p}]}(X)= \sum_{i}  (-1)^i \dim_{\QQ_\ell} \mathrm{H}^i(X_{\overline{k}},\QQ_\ell)$.
	\end{enumerate}
\end{lemma}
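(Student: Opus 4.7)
The plan is to factor the computation through two symmetric monoidal functors: base change to the algebraic closure, followed by the $\ell$-adic realization. Both preserve Euler characteristics of strongly dualizable objects (by Lemma~\ref{lem:chi_natural} and its contravariant counterpart), so the rank of $\chi^{\A^1}(X)$ can be transported all the way into $\mathrm{D}(\QQ_\ell)$ where it is manifestly the alternating sum of $\ell$-adic Betti numbers.

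First I would handle the base change step. The pullback functor $\SHk\to \SH(\overline{k})$ is symmetric monoidal, and its effect on endomorphisms of the unit is the base change homomorphism $\GW(k)\to \GW(\overline{k})$. Since $\overline{k}$ is algebraically closed, every nonzero element is a square, so $\GW(\overline{k})\simeq \Z$ and the homomorphism $\GW(k)\to \Z$ induced by base change is precisely $\rank$. Using Theorem~\ref{thm:dualizable} to secure strong dualizability of $X_{\overline{k}}$ (in the appropriate localization when $\chark k=p\neq 0$) and applying Lemma~\ref{lem:chi_natural}, I obtain
\[
\rank \chi^{\A^1}(X)= \chi^{\SH(\overline{k})}\bigl(\Sigma^\infty_{\T} X_{\overline{k},+}\bigr),
\]
and the analogous statement over $\GW(k)[\tfrac{1}{p}]$ in positive characteristic.

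Next I would apply the $\ell$-adic realization $\mathrm{Re}_{\QQ_\ell}\colon \SH(\overline{k})\to \mathrm{D}(\QQ_\ell)$. Since this is a symmetric monoidal functor (contravariant, which poses no obstruction to preserving Euler characteristics: a contravariant symmetric monoidal functor sends the duality data of $X$ to the duality data of $F(X)$ with $\mathrm{ev}$ and $\mathrm{coev}$ swapped, and the Euler characteristic, being a self-map of the unit, is fixed by this swap; equivalently, one may view $\mathrm{Re}_{\QQ_\ell}$ as a covariant symmetric monoidal functor to $\mathrm{D}(\QQ_\ell)^{\op}$ and apply Lemma~\ref{lem:chi_natural} directly), we get
\[
\rank \chi^{\A^1}(X)= \chi^{\mathrm{D}(\QQ_\ell)}\bigl(\mathrm{Re}_{\QQ_\ell}\Sigma^\infty_{\T} X_{\overline{k},+}\bigr).
\]
For a dualizable, i.e.\ perfect, complex of $\QQ_\ell$-vector spaces, the Euler characteristic computed in $\mathrm{D}(\QQ_\ell)$ equals the alternating sum of dimensions of the cohomology groups; combined with property (2) in the definition of $\mathrm{Re}_{\QQ_\ell}$, this identifies the right-hand side with $\sum_i (-1)^i\dim_{\QQ_\ell} \mathrm{H}^i(X_{\overline{k}},\QQ_\ell)$.

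In characteristic $p$ the same argument runs verbatim inside $\SH(\overline{k})[\tfrac{1}{p}]$: strong dualizability there is guaranteed by Theorem~\ref{thm:dualizable}(2), and $\mathrm{Re}_{\QQ_\ell}$ factors through the $\Z[\tfrac{1}{p}]$-localization since $p$ is a unit in $\QQ_\ell$ (as $\ell\neq p$). The only mildly delicate point of the proof is the formal bookkeeping surrounding the contravariance of $\mathrm{Re}_{\QQ_\ell}$, but as noted this amounts to a harmless reindexing and does not alter the Euler characteristic.
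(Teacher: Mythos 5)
Your proposal is correct and follows essentially the same route as the paper: base change to $\overline{k}$ (identifying $\GW(\overline{k})\cong\Z$ via rank), then the $\ell$-adic realization into $\mathrm{D}(\QQ_\ell)$, using Lemma~\ref{lem:chi_natural} at each step and finishing with the observation that the Euler characteristic of a perfect complex is the alternating sum of its cohomology dimensions. The paper packages this as a single commutative diagram rather than two sequential steps, and does not spell out the remark about contravariance that you include, but these are presentational differences only.
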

\begin{proof}
	Let $F\colon \SHk \to \SH(\overline{k})$ and $\mathrm{Re}_{\QQ_\ell}\colon \SH(\overline{k}) \to \mathrm{D}(\QQ_\ell)$ be the extension of scalars and the $\ell$-adic realization respectively. Suppose that $\chark k =0$, the proof in the other case is literally the same. Consider the following diagram.
	\[
	\xymatrix{
		\Hom_{\SH(k)}(\Sigma^\infty_{\T} (\Spec k)_+,\Sigma^\infty_{\T} (\Spec k)_+) \ar[r]^(0.75)\simeq \ar[d]_F
		& \GW(k) \ar[d]_f &\\
		\Hom_{\SH(\overline{k})}(\Sigma^\infty_{\T} (\Spec \overline{k})_+,\Sigma^\infty_{\T} (\Spec \overline{k})_+) \ar[r]^(0.75)\simeq  \ar[d]_{\mathrm{Re}_{\QQ_\ell}}
		& \GW(\overline{k}) \ar[r]_(0.6){\rank}^(0.6){\simeq} & \Z \ar[d]_i \\	
		\Hom_{\mathrm{D}(\QQ_\ell)}(\QQ_\ell,\QQ_\ell) \ar[rr]^\simeq & & \QQ_\ell	
	}
	\]
	Here $f$ is induced by the extension of scalars and $i$ is the inclusion. The top square commutes since the horizontal isomorphisms are functorial. The bottom part commutes since $\mathrm{Re}_{\QQ_\ell}$ is a unital homomorphism of rings and there exists a unique such homomorphism from $\Z$ to $\mathbb{Q}_\ell$. Then applying Lemma~\ref{lem:chi_natural} to the symmetric monoidal functors $F$ and $\mathrm{Re}_{\QQ_\ell}$ we get
	\[
	\rank \chi^{\A^1}(X)= (\mathrm{Re}_{\QQ_\ell}\circ F)(\chi^{\A^1}(X))=\chi^{\mathrm{D}(\QQ_\ell)}\left(\mathrm{Re}_{\QQ_\ell}(\Sigma^\infty_\T(X_{\overline{k}})_+)\right).
	\]	
	The claim follows since the Euler characteristic in the derived category is given by the alternating sum of cohomology group dimensions.
\end{proof}

\begin{lemma} \label{lem:formallyrealEuler}
	Let $k$ be a formally real field and $X\in\Sm_k$. Then $\chi^{\A^1}(X)$ is a unit if and only if $\chi^{\A^1}(X_K)$ is a unit for every field extension $K/k$ with $K$ being real closed.
\end{lemma}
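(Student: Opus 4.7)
The plan is to reduce the lemma to a classical characterization of units in the Grothendieck--Witt ring: for any field $k$, an element $q \in \GW(k)$ is a unit if and only if $\rank q = \pm 1$ and $\sigma_P(q) = \pm 1$ for every ordering $P$ of $k$, where $\sigma_P$ denotes the signature at $P$. This follows from the standard description of $\Spec \GW(k)$, whose maximal ideals are precisely the kernels of the composites $\GW(k) \xrightarrow{\rank} \Z \twoheadrightarrow \Z/\ell$ and $\GW(k) \xrightarrow{\sigma_P} \Z \twoheadrightarrow \Z/\ell$ as $\ell$ ranges over primes and $P$ over orderings of $k$.

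The ``only if'' direction of the lemma is formal: the base change functor $\SHk \to \SH(K)$ is symmetric monoidal, so by Lemma~\ref{lem:chi_natural} the element $\chi^{\A^1}(X_K)$ is the image of $\chi^{\A^1}(X)$ under the induced ring homomorphism $\GW(k) \to \GW(K)$, and ring homomorphisms preserve units.

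For the converse, I would use the bijection between orderings of $k$ and isomorphism classes over $k$ of real closures $K_P \supseteq k$: every ordering of $k$ extends (uniquely up to isomorphism over $k$) to a real closure, and every real closed extension of $k$ carries a unique ordering, which restricts to an ordering of $k$. For a real closed field $K$, the pair $(\rank, \sigma)$ identifies $\GW(K)$ with $\{(a,b) \in \Z \times \Z : a \equiv b \pmod{2}\}$, so an element of $\GW(K)$ is a unit iff both its rank and its signature lie in $\{\pm 1\}$. Thus the hypothesis that $\chi^{\A^1}(X_{K_P})$ is a unit for every real closure $K_P$ forces $\rank \chi^{\A^1}(X) = \pm 1$ and $\sigma_P(\chi^{\A^1}(X)) = \pm 1$ for every ordering $P$ of $k$, and the characterization quoted above completes the argument. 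I do not foresee any serious obstacle beyond locating a convenient reference for the spectral description of $\GW(k)$ (for instance in Lam's or Scharlau's books on quadratic forms); the remainder of the argument is entirely formal.
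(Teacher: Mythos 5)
Your proposal is correct, and the ``only if'' direction is handled exactly as in the paper (base change is a symmetric monoidal functor, so Lemma~\ref{lem:chi_natural} applies). For the converse you take a genuinely different, though closely related, route. The paper argues as follows: if $q_{k_v}$ is a unit for every real closure $k_v$, then since $\GW(k_v)\cong\Z[\epsilon]/(\epsilon^2-1)$ and units there square to $1$, the element $q^2-1$ lies in the kernel of $\varphi\colon\GW(k)\to\prod_v\GW(k_v)$; by \cite[Chapter~2, Theorem~7.10]{Sch85} this kernel is the nilradical of $\GW(k)$, so $q^2$ is $1$ plus a nilpotent, hence a unit, hence $q$ is a unit. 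You instead characterize units of $\GW(k)$ outright as the elements with $\rank=\pm1$ and $\sigma_P=\pm1$ for every ordering $P$, via the description of the maximal ideals of $\GW(k)$, and then read off both conditions from the hypothesis at the real closures. The two arguments rest on the same underlying structure theory of Witt rings (Pfister's local-global principle is what makes the kernel of $\varphi$ nil, and it is also what pins down $\Spec W(k)$), but the paper gets away with a single citation plus the elementary ``$1+\mathrm{nilpotent}$ is a unit'' trick, whereas your route needs the spectral description of $\GW(k)$. That description is standard for $W(k)$ (Lorenz--Leicht; Lam; Scharlau), but transporting it to $\GW(k)$ requires the extra — small but nonzero — step of using $\GW(k)\cong W(k)\times_{\Z/2}\Z$ to see that the maximal ideals are exactly $\rank^{-1}(p\Z)$ and $\sigma_P^{-1}(p\Z)$; so your remark that you ``do not foresee any serious obstacle beyond locating a convenient reference'' is slightly optimistic, since the usual references state the result for $W$ rather than $\GW$. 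With that caveat filled in, your argument is complete and correct.
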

\begin{proof}
	The "only if" part is clear from Lemma~\ref{lem:chi_natural} since the field extension induces a symmetric monoidal functor between the motivic stable homotopy categories. For the "if" part let $V$ be the set of all isomorphism classes of field extensions $K/k$ with $K$ being real closed. Recall that by \cite[Chapter~2, Corollary~4.8]{Sch85} for a real closed $K$ we have
	\[
	\GW(K) = \Z[\epsilon]/(\epsilon^2-1)
	\]
	where $\epsilon$ corresponds to the rank $1$ bilinear form $\langle -1 \rangle$, in particular, $q\in \GW(K)$ is invertible if and only if $q^2=1$. Consider the homomorphism
	\[
	\varphi \colon \GW(k) \to \prod_{K\in V} \GW(K)
	\]
	induced by field extensions. Let $q\in \GW(k)$ be such that $q_{K}$ is invertible for every $K\in V$, then by the above we have $\varphi(q^2)=1$ whence $q^2-1 \in \ker \varphi$. It follows from \cite[Chapter~2, Theorem~7.10 and Corollary~2.2]{Sch85} that $\ker \varphi$ is the nilradical of $\GW(k)$ thus $q^2=1+a$ for some nilpotent $a$. Then $q^2$ is invertible whence $q$ is invertible as well. Applying this to $q=\chi^{\A^1}(X)$ and using Lemma~\ref{lem:chi_natural} we get the claim.
\end{proof}


\begin{definition} \label{def:Betti}
	Let $\SH$ be the classical stable homotopy category. We denote
	\[
	\BettiC\colon \SH(\RR) \to \SH,\quad \BettiR\colon \SH(\RR) \to \SH
	\]
	the \textit{$\CC$-Betti and $\RR$-Betti realization functors} \cite[Section~3.3]{MV99} which for $X\in\Sm_\RR$ have
	\[
	\BettiC(\Sigma^\infty_{\T} X_+)= \Sigma^\infty_{S^1} X(\CC)_+,\quad \BettiR(\Sigma^\infty_{\T} X_+)= \Sigma^\infty_{S^1} X(\RR)_+.
	\]
	These functors are symmetric monoidal e.g. by \cite[Corollary~2.39]{Rob15}.
\end{definition}

\begin{remark}[{\cite[Remark~2.3]{Lev17}}] \label{rem:Betti}
	For $X\in \Sm_\RR$ one has
	\[
	\rank \chi^{\A^1}(X)=\BettiC(\chi^{\A^1}(X))=\chi^{\tp}(X(\CC)), \quad 
	\operatorname{sgn} \chi^{\A^1}(X)=\BettiR(\chi^{\A^1}(X))=\chi^{\tp}(X(\RR)).
	\]
\end{remark}

\section{$\A^1$-Euler characteristic of $G/N$: rank} \label{section:rank}

\begin{lemma} \label{lem:ladic}
	Let $k$ be an algebraically closed field, $G$ be a reductive group over $k$, $T\le G$ be a maximal torus and $N=N_G(T)$ be the normalizer of $T$ in $G$. Then for a prime $\ell\neq \chark k$ we have
	\[
	\mathrm{H}^i(G/N,\QQ_\ell)=\left\{
	\begin{array}{ll}
		\QQ_\ell, & i=0,\\
		0, & i\neq 0,
	\end{array} \right.
	\]
	where $\mathrm{H}^*(G/N,\QQ_\ell)=\varprojlim\limits_n \mathrm{H}^*_{\et}(G/N,\Z/\ell^n\Z)\otimes_{\Z}\QQ$ are the $\ell$-adic cohomology groups of $G/N$.
\end{lemma}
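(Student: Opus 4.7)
The plan is to reduce the computation to the classical Borel description of the cohomology of $G/B$ via the Galois cover $G/T \to G/N$ and an affine bundle argument. Since $k$ is algebraically closed, the reductive group $G$ is automatically split, so we may fix a Borel subgroup $B$ containing the maximal torus $T$, and the Weyl group $W = N/T$ is a finite group of order invertible in $\QQ_\ell$ (being coprime to $\chark k$, as $|W|$ divides $|W|!$ which involves only primes $\le \rank G$, and in any case a finite group order is invertible in $\QQ_\ell$ unless $\ell$ divides it; a transfer argument works whenever $|W|$ is invertible in the coefficients, which holds here since $\QQ_\ell$ is a field of characteristic $0$).

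First I would observe that $G/T \to G/N$ is a finite \'etale Galois cover with group $W$. Since $|W|$ is invertible in $\QQ_\ell$, the transfer/averaging argument (equivalently, the Hochschild--Serre spectral sequence for this cover) collapses and gives a natural isomorphism
\[
\mathrm{H}^*(G/N, \QQ_\ell) \simeq \mathrm{H}^*(G/T, \QQ_\ell)^W.
\]
Next, the projection $G/T \to G/B$ is a Zariski-locally trivial affine bundle with fiber the unipotent radical $R_u(B) \simeq \A^{\dim G/B}$, so by $\A^1$-homotopy invariance of $\ell$-adic cohomology it induces an isomorphism $\mathrm{H}^*(G/T, \QQ_\ell) \simeq \mathrm{H}^*(G/B, \QQ_\ell)$, which one checks is equivariant for the natural $W$-action transferred from $G/T$.

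Then I would invoke the classical theorem of Borel: as a graded $W$-representation,
\[
\mathrm{H}^*(G/B, \QQ_\ell) \simeq \mathrm{Sym}^*(X^*(T) \otimes \QQ_\ell) \big/ (\mathrm{Sym}^*(X^*(T) \otimes \QQ_\ell)^W_+),
\]
the coinvariant algebra of $W$ acting on the symmetric algebra of the character lattice. By Chevalley's theorem this coinvariant algebra is, as an ungraded $W$-representation, isomorphic to the regular representation $\QQ_\ell[W]$, whose space of $W$-invariants is one-dimensional and concentrated in degree $0$ (spanned by the class of a point). Taking invariants therefore yields $\mathrm{H}^*(G/N, \QQ_\ell) = \QQ_\ell$ in degree $0$ and zero otherwise.

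The main obstacle is verifying that the $W$-action on $\mathrm{H}^*(G/B, \QQ_\ell)$ used in Borel's theorem (coming from the presentation via the coinvariant algebra, or equivalently from the Springer/BGG action) coincides with the one transferred from the right $W$-action on $G/T$; this is classical for singular cohomology over $\CC$ and extends to $\ell$-adic cohomology by standard base change and comparison arguments, or alternatively can be checked directly using the Bruhat decomposition, since both actions permute the Schubert classes in the same way.
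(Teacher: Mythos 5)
Your proposal is correct and follows essentially the same route as the paper: reduce to $W$-invariants of $\mathrm{H}^*(G/T,\QQ_\ell)$ via the Galois cover $G/T\to G/N$ (Hochschild--Serre), identify $\mathrm{H}^*(G/T,\QQ_\ell)$ with $\mathrm{H}^*(G/B,\QQ_\ell)$ through the affine bundle $G/T\to G/B$, and then invoke the Borel presentation together with Chevalley's theorem to see this is the regular representation of $W$. The one small slip is describing the degree-$0$ invariant as ``the class of a point''---it is the unit (fundamental) class in $\mathrm{H}^0$, the class of a point lives in top degree---but this does not affect the argument.
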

\begin{proof}
	Let $B\le G$ be a Borel subgroup such that $T\le B$, let $W=N/T$ be the Weyl group and let $\mathfrak{X}=\Hom(T,\Gm)$ be the lattice of characters. It is well known that $\ell$-adic cohomology groups of $G/B$ admit the so-called Borel presentation
	\[
	\varphi\colon S_{\QQ_\ell}(\mathfrak{X})/S_{\QQ_\ell}(\mathfrak{X})^W_{>0} \xrightarrow{\simeq} \mathrm{H}^*(G/B,\QQ_\ell),
	\] 
	where
	\begin{enumerate}
		\item $S_{\QQ_\ell}(\mathfrak{X})=Sym^*(\mathfrak{X})\otimes_\Z \QQ_\ell$ is the symmetric algebra of $\mathfrak{X}$ with $\QQ_\ell$-coefficients,
		\item $W$ acts on $S_{\QQ_\ell}(\mathfrak{X})$ via its canonical action on $\mathfrak{X}$,
		\item $S_{\QQ_\ell}(\mathfrak{X})^W_{>0}$ is the subspace generated by $W$-invariant homogeneous elements of positive degree,
		\item $\varphi$ is a homomorphism of algebras which in degree $1$ is given by
		\[
		\varphi(\chi)=c_1(L_\chi)\in \mathrm{H}^2(G/B,\QQ_\ell)
		\]
		for the line bundle $L_\chi$ over $G/B$ associated to the character $\chi$.
	\end{enumerate}
	This follows from the classical computation of rational Chow groups $\mathrm{CH}^*(G/B)\otimes \QQ$ (see e.g. \cite[4.6]{Dem74}) combined with the fact that since the Chow motive of $G/B$ is a sum of twisted Tate motives \cite[Theorem~2.1]{K91} and $\ell$-adic cohomology is a Weil cohomology theory then the cycle class map 
	\[
	\mathrm{CH}^*(G/B)\otimes \QQ_\ell \to  \mathrm{H}^{*}(G/B,\QQ_\ell)
	\]
	is an isomorphism.
	
	Consider the canonical morphism
	\[
	\pi\colon G/T\to G/B.
	\]
	Let $w_0$ be the longest element of the Weyl group and $U$ be the unipotent radical of $B$. It follows from the Bruhat decomposition \cite[Theorem~14.12]{Bor91} that over the big cell $Bw_0B/B\subseteq G/B$ morphism $\pi$ looks as follows.
	\[
	\xymatrix{
		\pi^{-1}(Bw_0B/B) \ar[r]^(0.55){=} \ar[r] \ar[d]^\pi &Bw_0B/T  \ar[d] & U\times B/T \ar[d]_{\pi_U} \ar[l]_(.45)\simeq\\
		Bw_0B/B \ar[r]^= & Bw_0B/B  & U \ar[l]_(0.35)\simeq	
	}
	\]
	Here $\pi_U$ is the projection onto the first factor. Recall that $B/T\cong U$ is a unipotent group thus $B/T$ is isomorphic as a variety to an affine space $\A^m$. Since $\pi$ is $G$-equivariant, $G$-translates of the big cell $Bw_0B/B$ give a Zariski open cover trivializing $\pi$ yielding that $\pi$ is a Zariski locally trivial bundle with the fibers being affine spaces. It follows that the pullback
	\[
	\pi^*\colon \mathrm{H}^*(G/B,\QQ_\ell) \to \mathrm{H}^*(G/T,\QQ_\ell)
	\]
	is an isomorphism.
	
	Combining the above we obtain that $\mathrm{H}^*(G/T,\QQ_\ell)$ admits the same Borel presentation as $\mathrm{H}^*(G/B,\QQ_\ell)$, i.e. that there is an isomorphism
	\[
	\psi\colon S_{\QQ_\ell}(\mathfrak{X})/S_{\QQ_\ell}(\mathfrak{X})^W_{>0} \xrightarrow{\simeq} \mathrm{H}^*(G/T,\QQ_\ell),\quad \psi(\chi)=c_1(L_\chi)\in \mathrm{H}^2(G/T,\QQ_\ell),
	\]
	where $L_\chi$ is the line bundle over $G/T$ associated to the character $\chi$. It is easy to see that $\psi$ is $W$-equivariant where the action of $W$ on $S_{\QQ_\ell}(\mathfrak{X})/S_{\QQ_\ell}(\mathfrak{X})^W_{>0}$ is induced by the action on $\mathfrak{X}$ as before while the action of $W$ on $\mathrm{H}^*(G/T,\QQ_\ell)$ comes from the right action of $W$ on $G/T$. Then \cite[Theorem B]{Che55b} implies that $\mathrm{H}^*(G/T,\QQ_\ell)$ is the regular representation of $W$. Thus the Hochschild-Serre spectral sequence
	\[
	E_1^{p,q}=\mathrm{H}^p(W,\mathrm{H}^q(G/T,\QQ_\ell))\Rightarrow \mathrm{H}^{p+q}(G/N,\QQ_\ell)
	\]
	associated to the Galois cover $G/T \to G/N$ degenerates yielding
	\[
	\mathrm{H}^*(G/N,\QQ_\ell) = (\mathrm{H}^*(G/T,\QQ_\ell))^W = \QQ_\ell. \qedhere
	\]
\end{proof}

\begin{corollary} \label{cor:chirank}
	Let $G$ be a reductive group over a field $k$, $T\le G$ be a maximal torus and $N=N_G(T)$ be the normalizer of $T$ in $G$. Then
	\begin{enumerate}
		\item
		$\rank \chi^{\A^1}(G/N)=1$ if $\chark k = 0$,
		\item
		$\rank \chi_{\Z[\frac{1}{p}]}^{\A^1}(G/N)=1$ if $\chark k = p\neq 0$.		
	\end{enumerate}
\end{corollary}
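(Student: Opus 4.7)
The plan is to deduce Corollary~\ref{cor:chirank} by combining the two lemmas that have just been established in this section and the preceding one, namely Lemma~\ref{lem:ladicrank} and Lemma~\ref{lem:ladic}. Lemma~\ref{lem:ladicrank} reduces the computation of $\rank \chi^{\A^1}(G/N)$ (or its $\Z[\tfrac{1}{p}]$-variant in positive characteristic) to the alternating sum of $\ell$-adic Betti numbers of $(G/N)_{\overline{k}}$ for a prime $\ell \neq \chark k$.

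The first step is to identify $(G/N)_{\overline{k}}$ with a quotient of the form considered in Lemma~\ref{lem:ladic}. The formation of the normalizer of a torus commutes with base change of the base field, so $T_{\overline{k}} \le G_{\overline{k}}$ is a maximal torus in the reductive group $G_{\overline{k}}$ over the algebraically closed field $\overline{k}$, and $N_{\overline{k}} = N_{G_{\overline{k}}}(T_{\overline{k}})$; consequently $(G/N)_{\overline{k}} \simeq G_{\overline{k}}/N_{\overline{k}}$. The second step is to invoke Lemma~\ref{lem:ladic} to conclude that
\[
\mathrm{H}^i((G/N)_{\overline{k}},\QQ_\ell) = \begin{cases} \QQ_\ell, & i=0, \\ 0, & i\neq 0. \end{cases}
\]
The final step is to plug this into the formula provided by Lemma~\ref{lem:ladicrank}, obtaining
\[
\rank \chi^{\A^1}(G/N) = \sum_{i} (-1)^i \dim_{\QQ_\ell} \mathrm{H}^i((G/N)_{\overline{k}},\QQ_\ell) = 1
\]
in characteristic zero, and analogously for the $\Z[\tfrac{1}{p}]$-version in positive characteristic.

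Since both input lemmas have already been proven, there is no substantive obstacle here; the only minor point to verify carefully is the stability of ``maximal torus'' and ``normalizer'' under base change to the algebraic closure, which is standard.
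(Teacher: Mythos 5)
Your proof is correct and follows exactly the same route as the paper, which cites Lemma~\ref{lem:ladicrank} and Lemma~\ref{lem:ladic} without further elaboration. You have simply spelled out the base-change compatibility of maximal tori and their normalizers, which the paper leaves implicit.
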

\begin{proof}
	Follows from Lemma~\ref{lem:ladicrank} and Lemma~\ref{lem:ladic}.
\end{proof}

\section{$\A^1$-Euler characteristic of $G/N$: signature} \label{section:signature}

\begin{notation}
	In this section we deal with reductive groups defined over $\RR$ and Lie groups and use the following notational conventions.
	\begin{itemize}
		\item 
		Reductive groups over $\RR$ are denoted by capital letters, e.g. $G$, $H$ and $T$.
		\item
		Lie groups are denoted by capital calligraphic letters, e.g. $\mathcal{G}$, $\mathcal{H}$, $\mathcal{K}$ and $\mathcal{T}$.
		\item
		A torus in $G$ is a torus in the scheme-theoretic sense.
		\item
		A compact torus in $\mathcal{K}$ is a compact torus in the Lie-theoretic sense, that is a connected abelian compact Lie subgroup.
		\item
		For a reductive group $G$ over $\RR$ and a subgroup $H\le G$ the normalizer $N=N_G(H)$ is the scheme-theoretic normalizer of $H$ in $G$.
		\item
		For a Lie group $\mathcal{G}$ and a subgroup $\mathcal{H}\le \mathcal{G}$ the normalizer $\mathcal{N}=N_\mathcal{G}(\mathcal{H})$ is the group-theoretic normalizer.
		\item
		For a Lie group $\mathcal{K}$ we denote $\mathcal{K}^o$ the connected component of the identity.
	\end{itemize}
\end{notation}	

\begin{recollection} \label{rec:reductive_and_Lie}
	We recall the following well-known facts on the connections between reductive groups over $\RR$ and Lie groups.
	\begin{enumerate}
		\item
		For a reductive group $H$ over $\RR$ the Lie group $H(\RR)$ has a finite number of connected components \cite[p.~121, Corollary~1]{PR94}. 
		
		\item
		For a reductive group $H$ over $\RR$ and a compact subgroup $\mathcal{K}\le H(\RR)$ there exists a maximal compact subgroup $\mathcal{K}\le \mathcal{K}'\le H(\RR)$. Moreover, all maximal compact subgroups in $H(\RR)$ are conjugate \cite[Proposition~3.10(2)]{PR94}. 
		
		\item
		Let $H_1\le H_2$ be reductive groups over $\RR$ and $\mathcal{K}_1\le H_1(\RR)$, $\mathcal{K}_2\le H_2(\RR)$ be maximal compact subgroups with $\mathcal{K}_1\le \mathcal{K}_2$. Then $H_1(\RR)\cap \mathcal{K}_2=\mathcal{K}_1$, the map
		\[
		i\colon \mathcal{K}_2/\mathcal{K}_1\to H_2(\RR)/H_1(\RR)
		\]
		is an embedding and $\mathcal{K}_2/\mathcal{K}_1$ is a deformation retract of $H_2(\RR)/H_1(\RR)$ \cite[Theorem~A and below]{Mos62}. In particular, $\mathcal{K}_2/\mathcal{K}_1$ and $H_2(\RR)/H_1(\RR)$ have the same homotopy type and $\chi^\tp(H_2(\RR)/H_1(\RR))=\chi^\tp(\mathcal{K}_2/\mathcal{K}_1)$.
		
		\item
		Let $H$ be a reductive group over $\RR$ and $\mathcal{K}\le H(\RR)$ be a compact subgroup. Then there exists an algebraic subgroup $K\le H$ such that $K(\RR)=\mathcal{K}$ \cite[Proposition~VI.5.2]{Che55a}. If $\mathcal{K}$ is a compact torus then $K$ may be chosen to be a torus.
	\end{enumerate}
\end{recollection}

\begin{definition}
	Let $G$ be a reductive groups over $\RR$ and $\mathcal{K}\le G(\RR)$ be a maximal compact subgroup. Recall that $\rank \mathcal{K}$ is the dimension of a maximal compact torus in $\mathcal{K}$. We put
	\[
	\crk G = \rank \mathcal{K}
	\]
	and refer to it as \textit{the compact rank of $G$}. This number does not depend on the choice of $\mathcal{K}$ since all the maximal compact subgroups in $G(\RR)$ are conjugate.
\end{definition}

\begin{remark}
	The compact rank $\crk G$ of $G$ coincides with the dimension of a maximal anisotropic torus in $G$. This can be seen using the fact that a torus $T$ is anisotropic if and only if $T(\mathbb{R})$ is compact \cite[Example~8.16.2]{Bor91} and that in this case $\dim T = \dim T(\mathbb{R})$.
\end{remark}

\begin{lemma} \label{lem:crank0}
	Let $G$ be a semisimple group over $\RR$. If $\crk G=0$ then $G$ is trivial.
\end{lemma}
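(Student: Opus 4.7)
The plan is to reduce the vanishing of $\crk G$ to the vanishing of the Lie algebra $\mathfrak{g}=\mathrm{Lie}(G)$, from which $G=1$ follows since $G$ is a connected algebraic group over a field of characteristic zero.

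First, fix a maximal compact subgroup $\mathcal{K}\le G(\RR)$, which exists by Recollection~\ref{rec:reductive_and_Lie}.2. By the definition of the compact rank, the hypothesis translates to $\rank\mathcal{K}=\crk G=0$, so the maximal compact torus of the identity component $\mathcal{K}^o$ is trivial. Since a connected compact Lie group of positive dimension always contains a positive-dimensional maximal torus, this forces $\mathcal{K}^o=\{e\}$, and hence $\mathcal{K}$ is finite. In particular the Lie algebra $\mathfrak{k}$ of $\mathcal{K}^o$ is zero.

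Next, I would invoke the Cartan decomposition of the real semisimple Lie algebra $\mathfrak{g}$: choosing a Cartan involution $\theta$ of $\mathfrak{g}$ one obtains $\mathfrak{g}=\mathfrak{k}\oplus\mathfrak{p}$, where $\mathfrak{k}$ is the $(+1)$-eigenspace of $\theta$ and $\mathfrak{p}$ is the $(-1)$-eigenspace, satisfying the bracket relation $[\mathfrak{p},\mathfrak{p}]\subseteq\mathfrak{k}$. From $\mathfrak{k}=0$ it follows that $[\mathfrak{p},\mathfrak{p}]=0$, so $\mathfrak{g}=\mathfrak{p}$ is an abelian Lie algebra. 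A semisimple Lie algebra that is abelian must be zero, so $\mathfrak{g}=0$, which gives $G=1$.

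The only step which relies on input external to the paper is the use of the Cartan decomposition together with the standard fact that every nontrivial connected compact Lie group has a nontrivial maximal torus; both are classical and available in any standard reference on real semisimple Lie groups (e.g.\ Helgason). Semisimplicity of $G$ enters only at the very end, to rule out a nonzero abelian $\mathfrak{g}$, which is the main conceptual content of the argument.
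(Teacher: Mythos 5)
Your argument is correct, but it runs along a genuinely different line from the paper's. The paper argues by contradiction inside the algebraic-group framework: picking a maximal torus $T\le G$, it observes that if $T$ is split then $G$ is split and so contains a copy of $\SL_2$ or $\PSL_2$, whose real points contain the circle $\mathrm{SO}_2(\RR)\cong\mathrm{PSO}_2(\RR)$, contradicting $\crk G=0$; while if $T$ is not split its anisotropic part $T_a$ already supplies a positive-dimensional compact torus in $T_a(\RR)$. Your proof instead passes to the real Lie algebra $\mathfrak{g}$ and uses the Cartan decomposition $\mathfrak{g}=\mathfrak{k}\oplus\mathfrak{p}$ with $[\mathfrak{p},\mathfrak{p}]\subseteq\mathfrak{k}$ to deduce that $\mathfrak{k}=0$ forces $\mathfrak{g}$ abelian, hence zero by semisimplicity. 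Both are valid; the paper's version stays entirely within the scheme-theoretic language the section otherwise works in (split versus anisotropic tori), whereas yours imports the Cartan decomposition from real Lie theory and is arguably more conceptual, making transparent that semisimplicity is needed precisely to exclude an abelian $\mathfrak{g}$. One small remark on your closing sentence: semisimplicity is not used \emph{only} at the end --- it is already needed to have a Cartan involution and decomposition of $\mathfrak{g}$ in the first place, and to identify the $(+1)$-eigenspace with $\mathrm{Lie}(\mathcal{K})$.
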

\begin{proof}
	Suppose that $G$ is nontrivial and let $T\le G$ be a maximal torus. If $T$ is split then $G$ is split whence $G$ contains a copy of $\SL_2$ or $\mathrm{PSL}_2$ thus $\mathrm{SO}_2(\RR)\le \SL_2(\RR)\le G(\RR)$ or $\mathrm{PSO}_2(\RR)\le \mathrm{PSL}_2(\RR)\le G(\RR)$. Note that $\mathrm{SO}_2(\RR)\cong \mathrm{PSO}_2(\RR)$ is a compact torus of dimension $1$ which contradicts the assumption $\crk G=0$. If $T$ is not split then it contains an anisotropic part $T_a\le T$ and $T_a(\RR)$ is a compact torus of positive dimension.
\end{proof}

\begin{lemma} \label{lem:maximal_compact_tori}
	Let $G$ be a reductive group over $\RR$ and let $\mathcal{T}\le G(\RR)$ be a maximal compact torus. Then 
	\begin{enumerate}
		\item 
		there exists a unique maximal torus $T\le G$ such that $\mathcal{T}\le T(\RR)$,
		\item
		for $T$ as above one has $N_G(T)(\RR) = N_{G(\RR)} (\mathcal{T})$.
	\end{enumerate}	
\end{lemma}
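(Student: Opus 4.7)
The plan is to promote the compact torus $\mathcal{T}$ to an algebraic anisotropic torus via Recollection~\ref{rec:reductive_and_Lie}.4, obtaining an $\RR$-subtorus $K\le G$ with $K(\RR)=\mathcal{T}$. Because a compact real torus $(S^1)^n$ is Zariski dense in its complexification $\Gm^n$, the subset $\mathcal{T}\subseteq K$ is Zariski dense, so $K$ is the unique algebraic $\RR$-subgroup of $G$ with these real points, and the condition $\mathcal{T}\le T(\RR)$ becomes equivalent to $K\le T$. The statement of (1) thus reduces to showing that there is a unique maximal torus of $G$ containing $K$. Existence is routine: the centralizer $H:=Z_G(K)$ is a connected reductive $\RR$-group containing $K$ centrally, and any maximal torus of $H$ is simultaneously a maximal torus of $G$ containing $K$.

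The main obstacle, and the step I would focus on, is the sharper claim that $H$ itself is a torus. Granted this, any torus $T'\le G$ containing $K$ commutes with $K$ and so lies in $Z_G(K)=H$; in particular the only maximal torus of $G$ containing $K$ is $H$ itself, forcing $T=H$. To prove that $H$ is a torus I would argue by contradiction, assuming its derived subgroup $H'=[H,H]$ is nontrivial. Then $H'$ is a nontrivial semisimple $\RR$-group, and Lemma~\ref{lem:crank0} furnishes a compact torus $\mathcal{T}'\le H'(\RR)$ of positive dimension. Since $K$ is central in $H$, the tori $\mathcal{T}$ and $\mathcal{T}'$ commute in $G(\RR)$, so $\mathcal{T}\cdot\mathcal{T}'$ is a compact connected abelian Lie subgroup, hence itself a compact torus. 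The intersection $\mathcal{T}\cap\mathcal{T}'$ lies in $K(\RR)\cap H'(\RR)=(K\cap H')(\RR)$, which is finite because $K\cap H'$ sits inside the finite center of the semisimple group $H'$. Consequently $\dim(\mathcal{T}\cdot\mathcal{T}')=\dim\mathcal{T}+\dim\mathcal{T}'>\dim\mathcal{T}$, contradicting maximality of $\mathcal{T}$ in $G(\RR)$.

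Part (2) will then follow formally from (1) together with the uniqueness of $K$. The inclusion $N_G(T)(\RR)\subseteq N_{G(\RR)}(\mathcal{T})$ is automatic: conjugation by $g\in N_G(T)(\RR)$ preserves $T(\RR)$ as a topological group, hence its identity component and the unique maximal compact subgroup of that component, which equals $\mathcal{T}$ (because by the above argument the anisotropic part of $T=H$ coincides with $K$). Conversely, if $g\in N_{G(\RR)}(\mathcal{T})$ then $gKg^{-1}$ is an $\RR$-algebraic torus whose real points are $g\mathcal{T}g^{-1}=\mathcal{T}=K(\RR)$, so by uniqueness of $K$ we get $gKg^{-1}=K$; hence $gTg^{-1}=gZ_G(K)g^{-1}=Z_G(gKg^{-1})=Z_G(K)=T$, giving $g\in N_G(T)(\RR)$.
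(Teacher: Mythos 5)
Your proof is correct and follows essentially the same route as the paper's: both pass to the algebraic torus with real points $\mathcal{T}$, form its centralizer $Z_G(K)$, and invoke Lemma~\ref{lem:crank0} to kill the derived subgroup, concluding that $Z_G(K)$ is the unique maximal torus containing $K$ and hence the unique one with $\mathcal{T}\le T(\RR)$. Your write-up merely fills in two steps the paper leaves terse, namely the finiteness of $\mathcal{T}\cap\mathcal{T}'$ needed to see $\mathcal{T}\cdot\mathcal{T}'$ is strictly larger, and the explicit conjugation argument for the forward inclusion in part (2).
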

\begin{proof}
	Let $\widetilde{T}\le G$ be the minimal torus such that $\mathcal{T}\le \widetilde{T}(\RR)$ (such a torus exists by item (4) of Recollection~\ref{rec:reductive_and_Lie}) and let $C=C_G(\widetilde{T})$ be its centralizer and $C'$ be the derived subgroup of $C$. We have $\crk C' =0$ otherwise there exists a compact torus $\mathcal{C}'\le C'(\RR)$ and $\mathcal{T}$ is not maximal. Then Lemma~\ref{lem:crank0} yields that $C'$ is trivial whence $C$ is a torus. Note that $C$ coincides with its centralizer whence it is a maximal torus. For every maximal torus $T\le G$ such that $\mathcal{T}\le T(\RR)$ one has $\widetilde{T}\le T$ whence $T\le C$ yielding $T=C$ and we obtain the claim of item (1). 
	
	For the item (2) note that the uniqueness of $T$ implies that $N_G(T)(\RR) \ge N_{G(\RR)} (\mathcal{T})$. The other inclusion follows from the fact that $\mathcal{T}$ is the unique maximal compact torus in $T(\RR)$.
\end{proof}

\begin{corollary} \label{cor:maximal_compact_tori_conj}
	Let $G$ be a reductive group over $\RR$ and $T_1,T_2\le G$ be maximal tori such that 
	\[
	\crk T_1 = \crk T_2 = \crk G.
	\]
	Then $T_1$ and $T_2$ are conjugate by an element of $G(\RR)$.
\end{corollary}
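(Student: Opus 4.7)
The plan is to reduce the corollary to the conjugacy of maximal compact tori in $G(\RR)$ and then invoke the uniqueness in Lemma~\ref{lem:maximal_compact_tori}(1). First I would fix maximal compact tori $\mathcal{T}_i \le T_i(\RR)$ for $i=1,2$; by hypothesis each has dimension $\crk T_i = \crk G$. Since every compact torus of $G(\RR)$ is contained in some maximal compact subgroup, and all maximal compact subgroups of $G(\RR)$ have rank $\crk G$ (Recollection~\ref{rec:reductive_and_Lie}.2), the number $\crk G$ is actually the maximal possible dimension of a compact torus of $G(\RR)$. Thus each $\mathcal{T}_i$ is already a maximal compact torus of $G(\RR)$, not merely of $T_i(\RR)$.

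Next I would conjugate $\mathcal{T}_2$ onto $\mathcal{T}_1$ inside $G(\RR)$. Pick maximal compact subgroups $\mathcal{K}_i \le G(\RR)$ with $\mathcal{T}_i \le \mathcal{K}_i$ and, using the conjugacy of maximal compact subgroups, choose $g \in G(\RR)$ with $g\mathcal{K}_2 g^{-1} = \mathcal{K}_1$. Then $\mathcal{T}_1$ and $g\mathcal{T}_2 g^{-1}$ are two connected maximal compact tori inside $\mathcal{K}_1$, so both lie in the identity component $\mathcal{K}_1^o$ and are maximal tori of the connected compact Lie group $\mathcal{K}_1^o$. The classical conjugacy of maximal tori in a connected compact Lie group then supplies $h \in \mathcal{K}_1^o$ with $(hg)\mathcal{T}_2 (hg)^{-1} = \mathcal{T}_1$; setting $x = hg \in G(\RR)$ gives $x\mathcal{T}_2 x^{-1} = \mathcal{T}_1$.

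Finally, the subgroup $xT_2 x^{-1} \le G$ is a maximal algebraic torus whose real points contain $\mathcal{T}_1 = x\mathcal{T}_2 x^{-1}$. Applying the uniqueness clause in Lemma~\ref{lem:maximal_compact_tori}(1) to the maximal compact torus $\mathcal{T}_1 \le G(\RR)$ then forces $xT_2 x^{-1} = T_1$, which is what we wanted. No step really looks like a serious obstacle given the preceding lemmas; the only subtle point is the first paragraph, where one must argue that the dimension assumption $\crk T_i = \crk G$ promotes $\mathcal{T}_i$ from a maximal compact torus of $T_i(\RR)$ to one of $G(\RR)$, so that Lemma~\ref{lem:maximal_compact_tori} becomes applicable.
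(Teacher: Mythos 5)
Your proof is correct and follows essentially the same route as the paper's: conjugate the maximal compact subgroups, then conjugate the maximal compact tori inside them, and finally appeal to the uniqueness in Lemma~\ref{lem:maximal_compact_tori}(1). You are slightly more explicit than the paper on the (genuinely necessary) point that the hypothesis $\crk T_i = \crk G$ promotes each $\mathcal{T}_i$ to a maximal compact torus of $G(\RR)$ rather than merely of $T_i(\RR)$, which is what makes both the Lie-theoretic conjugacy step and the application of the lemma legitimate.
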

\begin{proof}
	Let $\mathcal{T}_1\le T_1(\RR)$ and $\mathcal{T}_2\le T_2(\RR)$ be maximal compact tori and let $\mathcal{T}_1\le \mathcal{K}_1\le G(\RR)$ and $\mathcal{T}_2\le \mathcal{K}_2\le G(\RR)$ be maximal compact subgroups. Since all maximal compact subgroups of $G(\RR)$ are conjugate and all the maximal compact tori in a compact Lie group are conjugate, there is a $g\in G(\RR)$ such that $g\mathcal{K}_2g^{-1} = \mathcal{K}_1$ and an $h\in \mathcal{K}_1$ such that $hg\mathcal{T}_2 g^{-1}h^{-1} = \mathcal{T}_1$. Then
	\[
	\mathcal{T}_1\le T_1(\RR),\quad \mathcal{T}_1=hg\mathcal{T}_2 g^{-1}h^{-1}\le (hgT_2g^{-1}h^{-1})(\RR)
	\]
	and Lemma~\ref{lem:maximal_compact_tori} yields that $T_1=hgT_2g^{-1}h^{-1}$.
\end{proof}

\begin{lemma} \label{lem:chi_orbit}
	Let $G$ be a reductive group over $\RR$, $T\le G$ be a maximal torus and $N=N_G(T)$ be the normalizer of $T$. Then
	\[
	\chi^\tp(G(\RR)/N(\RR))=\left\{\begin{array}{ll}
		0, & \crk T\neq \crk G \\
		1, & \crk T= \crk G.
	\end{array}  \right.
	\]
\end{lemma}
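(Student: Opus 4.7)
The plan is to reduce the computation to a statement about compact Lie groups via Mostow's theorem, then to split into two cases according to whether the compact rank of $T$ is maximal or not. Fix maximal compact subgroups $\mathcal{K}_1\le N(\RR)$ and $\mathcal{K}_2\le G(\RR)$ with $\mathcal{K}_1\le \mathcal{K}_2$; by Recollection~\ref{rec:reductive_and_Lie}.3 one has $\mathcal{K}_1 = N(\RR)\cap \mathcal{K}_2$ together with
\[
\chi^\tp(G(\RR)/N(\RR))= \chi^\tp(\mathcal{K}_2/\mathcal{K}_1).
\]
The structural remark driving the proof is that, since $T$ is the algebraic identity component of $N$, the subgroup $T(\RR)$ is open of finite index in $N(\RR)$, so the Lie-theoretic identity component satisfies $N(\RR)^o=T(\RR)^o$. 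Any compact torus contained in $\mathcal{K}_1\le N(\RR)$ is connected, hence lies in $N(\RR)^o\le T(\RR)$, and therefore has dimension at most $\crk T$.

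Suppose first $\crk T=\crk G$. Let $\mathcal{T}_T\le T(\RR)$ be a maximal compact torus; by assumption it is also a maximal compact torus of $G(\RR)$, and choosing $\mathcal{K}_1$ and $\mathcal{K}_2$ via Recollections~\ref{rec:reductive_and_Lie}.2--\ref{rec:reductive_and_Lie}.3 we may arrange $\mathcal{T}_T\le \mathcal{K}_1\le \mathcal{K}_2$. Lemma~\ref{lem:maximal_compact_tori} gives $N(\RR)=N_{G(\RR)}(\mathcal{T}_T)$, whence
\[
\mathcal{K}_1 = N(\RR)\cap \mathcal{K}_2 = N_{\mathcal{K}_2}(\mathcal{T}_T).
\]
Since any two maximal tori of the connected compact Lie group $\mathcal{K}_2^o$ are $\mathcal{K}_2^o$-conjugate, one has $\mathcal{K}_2=\mathcal{K}_2^o\cdot N_{\mathcal{K}_2}(\mathcal{T}_T)$, and therefore $\mathcal{K}_2/\mathcal{K}_1\cong \mathcal{K}_2^o/N_{\mathcal{K}_2^o}(\mathcal{T}_T)$ is the variety of maximal tori of the connected compact Lie group $\mathcal{K}_2^o$. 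Its Euler characteristic equals $1$ by the classical computation: $\chi^\tp(\mathcal{K}_2^o/\mathcal{T}_T)=|W|$, and the $|W|$-fold covering $\mathcal{K}_2^o/\mathcal{T}_T\to \mathcal{K}_2^o/N_{\mathcal{K}_2^o}(\mathcal{T}_T)$ forces $\chi^\tp(\mathcal{K}_2^o/N_{\mathcal{K}_2^o}(\mathcal{T}_T))=1$ by multiplicativity of $\chi^\tp$.

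Suppose now $\crk T<\crk G$. Choose a maximal compact torus $\mathcal{S}\le \mathcal{K}_2$, so $\dim \mathcal{S}=\crk G$, and let $\mathcal{S}$ act on the compact manifold $\mathcal{K}_2/\mathcal{K}_1$ by left translation. A coset $k\mathcal{K}_1$ is fixed precisely when $k^{-1}\mathcal{S} k\le \mathcal{K}_1$; but $k^{-1}\mathcal{S} k$ is a compact torus of dimension $\crk G$, while by the structural remark above every compact torus of $\mathcal{K}_1$ has dimension at most $\crk T<\crk G$. Hence the action is fixed-point free, and the classical fixed-point formula for compact torus actions yields $\chi^\tp(\mathcal{K}_2/\mathcal{K}_1)=\chi^\tp((\mathcal{K}_2/\mathcal{K}_1)^\mathcal{S})=0$.

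The delicate point is the uniform bound $\dim\mathcal{T}\le \crk T$ for every compact torus $\mathcal{T}\le \mathcal{K}_1$, which simultaneously enables the identification $\mathcal{K}_1=N_{\mathcal{K}_2}(\mathcal{T}_T)$ in the equal-rank case and the failure of fixed points in the strict-inequality case; once this and Lemma~\ref{lem:maximal_compact_tori} are in place, the two cases split cleanly into a classical variety-of-tori computation and a free torus action argument.
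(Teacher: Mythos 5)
Your proof is correct, and it takes a genuinely different route from the paper in both cases, though the shared scaffolding is the reduction to compact groups via Mostow and the identification $\mathcal{K}_G/\mathcal{K}_N\cong\mathcal{K}_G^o/N_{\mathcal{K}_G^o}(\mathcal{T})$ in the equal-rank case (which you obtain the same way, via Lemma~\ref{lem:maximal_compact_tori} and conjugacy of maximal compact tori). Where you diverge: for $\crk T<\crk G$ the paper fibers $\mathcal{K}_G/\mathcal{K}_T^o$ over $\mathcal{K}_G/(S^1)^{\crk G}$ with a positive-dimensional torus fiber to get $\chi^\tp=0$, then propagates through two finite coverings (needing the Mostow-based finiteness of $[\mathcal{K}_N:\mathcal{K}_T]$); you instead use your structural observation $N(\RR)^o=T(\RR)^o$ to bound the dimension of any compact torus in $\mathcal{K}_1$ by $\crk T$, so that a maximal compact torus $\mathcal{S}\le\mathcal{K}_2$ acts on $\mathcal{K}_2/\mathcal{K}_1$ without fixed points, and you conclude by the fixed-point formula $\chi^\tp(X)=\chi^\tp(X^{\mathcal{S}})$. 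This is cleaner and avoids having to separately control $\mathcal{K}_N$ versus $\mathcal{K}_T$. For $\crk T=\crk G$ the paper invokes Borel's theorem that $\mathrm{H}^*(\mathcal{K}_G^o/\mathcal{T},\RR)$ is the regular representation of $W$ and then runs the Leray--Serre spectral sequence; you instead use only $\chi^\tp(\mathcal{K}_2^o/\mathcal{T}_T)=|W|$ together with the fact that $\mathcal{K}_2^o/\mathcal{T}_T\to\mathcal{K}_2^o/N_{\mathcal{K}_2^o}(\mathcal{T}_T)$ is a free $W$-covering, so multiplicativity of $\chi^\tp$ finishes it. This is more elementary: it extracts the Euler characteristic directly without computing the cohomology ring. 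One cosmetic point: your closing paragraph suggests the bound $\dim\mathcal{T}\le\crk T$ is also what gives $\mathcal{K}_1=N_{\mathcal{K}_2}(\mathcal{T}_T)$ in the equal-rank case, but in fact that identification comes straight from Lemma~\ref{lem:maximal_compact_tori}(2) and Recollection~\ref{rec:reductive_and_Lie}.3; the structural remark is only needed for the fixed-point-freeness in the strict-inequality case.
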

\begin{proof}
	Let $\mathcal{K}_{T}\le T(\RR)$, $\mathcal{K}_{N}\le N(\RR)$ and $\mathcal{K}_{G}\le G(\RR)$ be maximal compact subgroups with $\mathcal{K}_{T}\le \mathcal{K}_{N}\le \mathcal{K}_G$. By the item (3) of Recollection~\ref{rec:reductive_and_Lie} we have
	\[
	\chi^\tp(G(\RR)/N(\RR)) = \chi^\tp (\mathcal{K}_G/\mathcal{K}_{N}).
	\]
	
	Suppose that $\crk T\neq \crk G$. For the connected component $\mathcal{K}_{T}^o$ of $\mathcal{K}_{T}$ we have $\mathcal{K}_{T}^o\simeq (S^1)^{\times \crk T}$. Let $\mathcal{K}_{T}^o\le (S^1)^{\times \crk G} \le \mathcal{K}_G$ be a maximal compact torus in $\mathcal{K}_G$. Then 
	\[
	\mathcal{K}_G/\mathcal{K}_{T}^o \to \mathcal{K}_G/(S^1)^{\times \crk G}
	\]
	is a locally trivial bundle with the fiber $(S^1)^{\times (\crk G - \crk T)}$ and since $\chi^\tp(S^1)=0$ we have 
	\[
	\chi^\tp(\mathcal{K}_G/\mathcal{K}_{T}^o)=\chi^\tp(\mathcal{K}_G/(S^1)^{\times \crk G})\cdot \chi^\tp(S^1)^{\crk G - \crk T}=0.
	\]
	The map
	$
	\mathcal{K}_G/\mathcal{K}_{T}^o \to \mathcal{K}_G/\mathcal{K}_{T}
	$
	is a finite covering whence $\chi^\tp ( \mathcal{K}_G/\mathcal{K}_{T})=0$. Note that
	\[
	[N(\RR):T(\RR)]\le \#((N/T)(\RR))\le \#((N/T)(\CC))= \# W
	\]
	where $W$ is the Weyl group of $G_\CC$. It follows from item (3) of Recollection~\ref{rec:reductive_and_Lie} applied to $T\le N$ that
	\[
	\mathcal{K}_{N}/\mathcal{K}_{T}=N(\RR)/T(\RR)
	\]
	yielding $[\mathcal{K}_{N}:\mathcal{K}_{T}]< \infty$. Thus $\mathcal{K}_G/\mathcal{K}_{T} \to \mathcal{K}_G/\mathcal{K}_{N}$ is a finite covering and $\chi^\tp ( \mathcal{K}_G/\mathcal{K}_{N})=0$.
	
	Suppose that $\crk T= \crk G$. Then $\mathcal{T}=\mathcal{K}_T^o$ is a maximal compact torus in $G(\RR)$. Lemma~\ref{lem:maximal_compact_tori} yields that $N(\RR)=N_{G(\RR)}(\mathcal{T})$ whence
	\[
	\mathcal{K}_N=N(\RR)\cap \mathcal{K}_G = N_{\mathcal{K}_G}(\mathcal{T}).
	\]
	Since all maximal compact tori in $\mathcal{K}_G^o$ are conjugate, $N_{\mathcal{K}_G}(\mathcal{T})$ meets all connected components of $\mathcal{K}_G$. Whence
	\[
	\mathcal{K}_G/\mathcal{K}_N = \mathcal{K}_G / N_{\mathcal{K}_G}(\mathcal{T}) = \mathcal{K}_G^o / N_{\mathcal{K}_G^o}(\mathcal{T}).
	\]
	The cohomology ring $\mathrm{H}^*(\mathcal{K}_G^o/\mathcal{T},\RR)$ admits the so-called Borel presentation \cite[\S 26]{Bor53}, in particular, $\mathrm{H}^*(\mathcal{K}_G^o/\mathcal{T},\RR)$ is the regular representation of the Weyl group $W=N_{\mathcal{K}_G^o}(\mathcal{T})/\mathcal{T}$ \cite[Lemma~27.1]{Bor53}. Then applying the Leray-Serre spectral sequence for the cover 
	\[
	\mathcal{K}_G^o /\mathcal{T}\to \mathcal{K}_G^o / N_{\mathcal{K}_G^o}(\mathcal{T})
	\]
	with the deck transformation group $W$ we obtain 
	\[
	\mathrm{H}^i(\mathcal{K}_G^o/N_{\mathcal{K}_G^o}(\mathcal{T}),\RR) = \left\{
	\begin{array}{ll}
		\RR, & i=0,\\
		0, & i\neq 0.
	\end{array} \right.
	\]
	Summing up all the above we obtain
	\[
	\chi^\tp(G(\RR)/N(\RR)) = \chi^\tp (\mathcal{K}_G/\mathcal{K}_{N})= \chi^\tp (\mathcal{K}_G / N_{\mathcal{K}_G}(\mathcal{T})) = \chi^\tp (\mathcal{K}_G^o / N_{\mathcal{K}_G^o}(\mathcal{T})) = 1. \qedhere
	\]
\end{proof}

\begin{lemma} \label{lem:chitop}
	Let $G$ be a reductive group over $\RR$, $T\le G$ be a maximal torus and $N=N_G(T)$ be the normalizer of $T$. Then 
	\[
	\chi^{\tp}((G/N)(\RR))=1.
	\]
\end{lemma}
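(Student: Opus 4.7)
The plan is to decompose $(G/N)(\RR)$ into $G(\RR)$-orbits and apply Lemma~\ref{lem:chi_orbit} term by term. Since $N$ is self-normalizing, $G/N$ is identified with the variety of maximal tori of $G$, so its $\RR$-points correspond to the maximal tori $T'\le G$ defined over $\RR$. The group $G(\RR)$ acts on $(G/N)(\RR)$ by conjugation, and the stabilizer of the point corresponding to such a $T'$ is precisely $N_G(T')(\RR)$; hence the set of orbits is in bijection with the $G(\RR)$-conjugacy classes of maximal tori defined over $\RR$, and the orbit of the class $[T_i]$ is homeomorphic to $G(\RR)/N_G(T_i)(\RR)$.

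To make this additive in the Euler characteristic I will argue that the orbits are finite in number and both open and closed in $(G/N)(\RR)$. All maximal tori have the same dimension, so each orbit has dimension equal to $\dim(G/N)$, the dimension of the ambient smooth manifold $(G/N)(\RR)$, and is therefore open. Since $(G/N)(\RR)$ is a real algebraic set it has only finitely many connected components, so the family of orbits is finite and each orbit is automatically closed as well. Additivity of $\chi^{\tp}$ then yields
\[
\chi^{\tp}((G/N)(\RR)) = \sum_i \chi^{\tp}\bigl(G(\RR)/N_G(T_i)(\RR)\bigr).
\]

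Lemma~\ref{lem:chi_orbit} reduces this sum to a count of orbits with $\crk T_i = \crk G$: those contribute $1$, the others contribute $0$. Corollary~\ref{cor:maximal_compact_tori_conj} says any two maximal tori of full compact rank are $G(\RR)$-conjugate, so at most one orbit contributes. To see at least one does, I will fix a maximal compact subgroup $\mathcal{K}\le G(\RR)$ and a maximal compact torus $\mathcal{T}\le\mathcal{K}$ of dimension $\crk G$; by Lemma~\ref{lem:maximal_compact_tori} there exists a maximal torus $T\le G$ with $\mathcal{T}\le T(\RR)$, and this $T$ necessarily has $\crk T = \crk G$. The sum therefore collapses to a single $1$.

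The main obstacle is the topological bookkeeping of the second step: additivity of $\chi^{\tp}$ requires the orbit decomposition to be a genuine finite partition into clopen pieces, which in turn rests on the equidimensionality of the orbits combined with the finiteness of connected components of a real algebraic variety. Once this is in place, the remaining work is the clean assembly of Lemma~\ref{lem:maximal_compact_tori}, Corollary~\ref{cor:maximal_compact_tori_conj}, and Lemma~\ref{lem:chi_orbit} sketched above.
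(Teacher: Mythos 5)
Your proof is correct and follows essentially the same structure as the paper's: decompose $(G/N)(\RR)$ into $G(\RR)$-orbits, identify each orbit with $G(\RR)/N_G(T_i)(\RR)$, apply additivity of $\chi^{\tp}$, and then invoke Lemma~\ref{lem:chi_orbit} together with Lemma~\ref{lem:maximal_compact_tori} and Corollary~\ref{cor:maximal_compact_tori_conj} to see that exactly one orbit contributes $1$ and the rest contribute $0$. The only real divergence is in justifying that the orbit decomposition is a finite partition into clopen pieces: the paper cites \cite[p.~121, Corollary~2 and its proof]{PR94}, whereas you give a direct argument (constant-rank dimension count to get openness, plus finiteness of connected components of a real algebraic set to get finiteness and closedness); both are valid, and your self-contained version trades a reference for the standing hypothesis that normalizers of maximal tori are smooth over $\RR$, which holds automatically in characteristic zero. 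One small presentational point: you assert the orbit is \emph{homeomorphic} to $G(\RR)/N_G(T_i)(\RR)$ before supplying the openness argument that actually makes the continuous bijection a homeomorphism, so the order of those two sentences should be swapped in a cleaned-up write-up.
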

\begin{proof}
	There is a natural bijection 
	\[
	(G/N)(\RR) \simeq \{\,\text{maximal tori $T_i\le G$}\,\}
	\]
	that arises from the fact that over an algebraically closed field all maximal tori in $G$ are conjugate. This bijection is $G(\RR)$-equivariant with respect to the canonical action on $(G/N)(\RR)$ and the action by conjugation on the right-hand side. It follows from \cite[p.~121, Corollary~2 and its proof]{PR94} that
	\[
	(G/N)(\RR) = \bigsqcup_{i=1}^n X_i
	\] 
	is a disjoint union of finitely many $G(\RR)$-orbits $X_i$ and all these orbits are open and closed in $(G/N)(\RR)$. Additivity of the Euler characteristic yields
	\[
	\chi^\tp((G/N)(\RR)) = \sum_{i=1}^n \chi^\tp(X_i).
	\]
	From the above identification of $(G/N)(\RR)$ with the set of maximal tori we have
	\[
	X_i \simeq G(\RR)/N_i(\RR)
	\]
	where $N_i=N_{G}(T_i)$ are normalizers of some pairwise nonconjugate maximal tori $T_i\le G$. Lemma~\ref{lem:maximal_compact_tori} and Corollary~\ref{cor:maximal_compact_tori_conj} yield that there exists a unique up to conjugation maximal torus $\widetilde{T}\le G$ such that $\crk \widetilde{T} = \crk G$. Then the claim follows from Lemma~\ref{lem:chi_orbit}.
\end{proof}
\begin{corollary} \label{cor:signature}
	Let $G$ be a reductive group over $\RR$, $T\le G$ be a maximal torus and $N=N_G(T)$ be the normalizer of $T$. Then 
	\[
	\operatorname{sgn} \chi^{\A^1}(G/N)=1.
	\]
\end{corollary}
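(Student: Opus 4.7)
The plan is direct: this corollary will follow immediately from combining the two tools already established in this section and Section~\ref{section:preliminary}. First I would note that since $\chark \RR = 0$, Theorem~\ref{thm:dualizable}(1) ensures that $G/N \in \Sm_\RR$ is strongly dualizable in $\SH(\RR)$, so $\chi^{\A^1}(G/N)$ is a well-defined element of $\GW(\RR)$ whose signature makes sense.

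Next, I would apply Remark~\ref{rem:Betti} with $X = G/N$ to obtain
\[
\operatorname{sgn} \chi^{\A^1}(G/N) = \BettiR(\chi^{\A^1}(G/N)) = \chi^{\tp}((G/N)(\RR)).
\]
Finally, I would invoke Lemma~\ref{lem:chitop}, which asserts precisely that $\chi^{\tp}((G/N)(\RR)) = 1$, and the corollary follows.

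There is essentially no obstacle here: all the real content has already been carried out in Lemma~\ref{lem:chitop}, where the $G(\RR)$-orbit decomposition of $(G/N)(\RR)$, the Mostow-type retraction comparing real points of $G(\RR)/N_i(\RR)$ to maximal compact quotients $\mathcal{K}_G/\mathcal{K}_{N_i}$, and the Borel presentation of $\mathrm{H}^*(\mathcal{K}_G^o/\mathcal{T},\RR)$ combine to show that precisely one orbit (the one corresponding to the unique-up-to-conjugacy maximal torus with $\crk T_i = \crk G$, whose existence is guaranteed by Corollary~\ref{cor:maximal_compact_tori_conj}) contributes Euler characteristic $1$, while every orbit with a non-maximal compact rank torus contributes $0$ thanks to an $S^1$-factor fibration argument. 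The present corollary is just the translation of that topological Euler characteristic computation into the signature statement via real Betti realization.
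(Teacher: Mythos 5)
Your proof is correct and matches the paper's own argument exactly: the paper's proof of this corollary is precisely "Follows from Remark~\ref{rem:Betti} and Lemma~\ref{lem:chitop}." Your added remark about strong dualizability (via Theorem~\ref{thm:dualizable}(1)) and your recap of the content of Lemma~\ref{lem:chitop} are accurate but not strictly necessary for the deduction.
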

\begin{proof}
	Follows from Remark~\ref{rem:Betti} and Lemma~\ref{lem:chitop}.
\end{proof}

\section{$\A^1$-Euler characteristic of $G/N$ and splitting principle} \label{section:final}

\begin{theorem} \label{thm:main}
	Let $G$ be a reductive group over a field $k$, $T\le G$ be a maximal torus and $N=N_G(T)$ be the normalizer of $T$ in $G$. Then the following holds.
	\begin{enumerate}
		\item 
		If $\chark k=0$ then $\chi^{\A^1}(G/N)$ is a unit.
		\item 
		If $\chark k=p\neq 0$ then $\chi^{\A^1}_{\Z[\frac{1}{p}]}(G/N)$ is a unit.
		\item 
		If $\chark k=p\neq 0$ and $G/N$ is strongly dualizable then $\chi^{\A^1}(G/N)$ is a unit.
	\end{enumerate}
\end{theorem}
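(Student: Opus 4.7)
The plan is to combine the invertibility criteria from Section~\ref{section:preliminary} with the rank computation of Section~\ref{section:rank} and the signature computation of Section~\ref{section:signature}, splitting into cases by the characteristic of $k$ and by whether $k$ is formally real.

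Since a formally real field has characteristic zero, when $\chark k = p \neq 0$ the field $k$ is automatically not formally real, so items (2) and (3) follow at once from Lemma~\ref{lem:rankunit}(2)--(3) combined with the rank computation $\rank\chi^{\A^1}_{\Z[\frac{1}{p}]}(G/N)=1$ of Corollary~\ref{cor:chirank}(2). For item (1), if $k$ is not formally real the same strategy applies via Lemma~\ref{lem:rankunit}(1) and Corollary~\ref{cor:chirank}(1). Otherwise $k$ is formally real (hence of characteristic zero), and Lemma~\ref{lem:formallyrealEuler} reduces the problem to showing that $\chi^{\A^1}(G_K/N_K)$ is a unit for every real closed extension $K/k$. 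Over such a $K$ the ring $\GW(K)$ is isomorphic to $\Z[\epsilon]/(\epsilon^2-1)$, and the units are exactly $\{\pm 1, \pm \epsilon\}$, characterized by rank and signature both lying in $\{\pm 1\}$; Corollary~\ref{cor:chirank}(1) supplies rank $1$, so only the signature remains to be computed.

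The main obstacle is that Corollary~\ref{cor:signature} gives the signature only when the base field is $\RR$, whereas an arbitrary real closed $K$ need not admit an embedding of $\RR$. To bridge this gap I would invoke the classification of reductive groups equipped with a maximal torus over a real closed field: isomorphism classes are parameterized by a root datum together with a class in $\mathrm{H}^1(\mathrm{Gal}(\overline{K}/K), \Aut)$, and because $\mathrm{Gal}(\overline{K}/K) = \Z/2$ and $\Aut$ is determined by the root datum, this classifying data is intrinsic to the root datum and does not depend on the particular real closed field $K$. Given $(G, T)$ over $K$, one can therefore descend it: there exists a pair $(G_0, T_0)$ defined over the subfield $k_0 \subseteq K$ of elements algebraic over $\QQ$ (which is itself real closed, canonically isomorphic to the field $\RR_{\mathrm{alg}}$ of real algebraic numbers, and admitting a canonical embedding into $\RR$) such that $(G, T) \cong ((G_0)_K, (T_0)_K)$. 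Writing $N_0 = N_{G_0}(T_0)$, naturality of $\chi^{\A^1}$ under extension of scalars (Lemma~\ref{lem:chi_natural}) sends the single element $\chi^{\A^1}(G_0/N_0) \in \GW(k_0)$ to both $\chi^{\A^1}(G_K/N_K) \in \GW(K)$ and $\chi^{\A^1}((G_0)_\RR / (N_0)_\RR) \in \GW(\RR)$ under the canonical isomorphisms $\GW(k_0) \cong \GW(K) \cong \GW(\RR) \cong \Z[\epsilon]/(\epsilon^2-1)$. The signatures of these two elements therefore agree, and Corollary~\ref{cor:signature} identifies this common signature as $1$, finishing the proof.
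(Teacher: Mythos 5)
Your overall strategy is the same as the paper's: dispose of the non-formally-real case via rank alone (Lemma~\ref{lem:rankunit} and Corollary~\ref{cor:chirank}), reduce the formally real case to $k$ real closed via Lemma~\ref{lem:formallyrealEuler}, and then transfer the signature computation over $\RR$ (Corollary~\ref{cor:signature}) to an arbitrary real closed field by descending $G$ to the real algebraic numbers. Your observation that a formally real field must have characteristic zero, so that items (2) and (3) are automatically in the non-formally-real case, is a small but genuine simplification of the case analysis.

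However, the descent step as you have written it has a gap. The paper first replaces $G$ by its adjoint quotient $\overline{G}=G/Z(G)$, noting $G/N = \overline{G}/N_{\overline{G}}(\overline{T})$, and only then invokes Grosshans \cite{Gro72}, which is precisely a theorem about \emph{semisimple} groups over real closed fields. You skip this reduction and instead assert directly that a reductive group with a maximal torus over a real closed field $K$ descends to the real algebraic numbers, on the grounds that the classifying set $\mathrm{H}^1(\mathrm{Gal}(\overline{K}/K),\Aut)$ ``is intrinsic to the root datum and does not depend on $K$''. That independence is exactly the nontrivial content one must prove: the coefficient group in that $\mathrm{H}^1$ is something like $G_{\mathrm{ad}}(\overline{K})\rtimes\mathrm{Out}$, whose group of $\overline{K}$-points and Galois action a priori depend on the choice of $\overline{K}$, so the cohomology set could in principle vary with $K$. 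For semisimple groups this independence is the theorem of \cite{Gro72}; without that reference (or an equivalent argument), and without the reduction to the adjoint case that makes the reference applicable, your descent claim is an assertion rather than a proof. To repair it, insert the reduction $G/N = \overline{G}/N_{\overline{G}}(\overline{T})$ and then cite \cite{Gro72} for the descent of the semisimple group $\overline{G}$ to $\RR^{\mathrm{alg}}$; note also that once $\overline{G}$ is descended one does not need to descend the pair $(\overline{G},\overline{T})$, since $G/N$ is independent of the choice of maximal torus (all maximal tori become conjugate over $\overline{k}$), which is how the paper identifies $G/N$ with the descended quotient.
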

\begin{proof}
	If $k$ is not formally real then the claim follows from Lemma~\ref{lem:rankunit} and Corollary~\ref{cor:chirank}, so we suppose that $k$ is formally real. It follows from Lemma~\ref{lem:formallyrealEuler} that it is sufficient to treat the case of $k$ being real closed, so from now on we suppose $k$ to be real closed. For $\overline{G}=G/Z(G)$ and $\overline{T}=T/Z(G)$ we have 
	\[
	G/N = \overline{G}/N_{\overline{G}}(\overline{T})
	\]
	whence may assume that $G$ is semisimple.
	
	Let $\RR^{\mathrm{alg}}=\RR\cap \overline{\QQ}$ be the real closure of $\QQ$ in $\RR$. It follows from \cite[Theorem~III.2.1]{Sch85} that there is a unique isomorphism between $\RR^{\mathrm{alg}}$ and the real closure of $\QQ$ in $k$, in particular, there is a canonical embedding $\RR^{\mathrm{alg}}\subseteq k$. The classification of semisimple groups over real closed fields \cite[\S 4]{Gro72} yields that there exists a semisimple group $\widetilde{G}$ defined over $\RR^{\mathrm{alg}}$ such that $\widetilde{G}_k\cong G$. Let $\widetilde{T}\le \widetilde{G}$ be a maximal torus and $\widetilde{N}=N_{\widetilde{G}}(\widetilde{T})$ be the normalizer of $\widetilde{T}$ in $\widetilde{G}$. Applying Corollary~\ref{cor:chirank} and Corollary~\ref{cor:signature} we obtain that
	\[
	\rank \chi^{\A^1}(\widetilde{G}_\RR/\widetilde{N}_\RR) =1, \quad \operatorname{sgn}\chi^{\A^1}(\widetilde{G}_\RR/\widetilde{N}_\RR) =1
	\]
	whence $\chi^{\A^1}(\widetilde{G}_\RR/\widetilde{N}_\RR)=1$. The field extensions $k/\RR^{\mathrm{alg}}$ and $\RR/\RR^{\mathrm{alg}}$ induce isomorphisms
	\[
	\GW(k)\cong \GW(\RR^{\mathrm{alg}}) \cong \GW(\RR)
	\]
	by \cite[Chapter~2, Corollary~4.8]{Sch85}. Then Lemma~\ref{lem:chi_natural} applied to the symmetric monoidal functors $\SH(\RR^{\mathrm{alg}})\to \SH(\RR)$ and $\SH(\RR^{\mathrm{alg}})\to \SHk$ yields 
	\[
	\chi^{\A^1}(\widetilde{G}_k/\widetilde{N}_k) = \chi^{\A^1}(\widetilde{G}_\RR^{\mathrm{alg}}/\widetilde{N}_\RR^{\mathrm{alg}}) = \chi^{\A^1}(\widetilde{G}_\RR/\widetilde{N}_\RR)=1.
	\]
	Recall that over an algebraically closed field all the maximal tori in a semisimple algebraic group are conjugate. Whence over any field the variety $G/N$ does not depend on the choice of a maximal torus $T$. Then it follows from $G\cong \widetilde{G}_k$ that $G/N \cong \widetilde{G}_k/\widetilde{N}_k$ and
	\[
	\chi^{\A^1}(G/N)=\chi^{\A^1}(\widetilde{G}_k/\widetilde{N}_k)=1.\qedhere
	\]
\end{proof}

Combining this with the motivic Becker--Gottlieb transfers from \cite{Lev19} we obtain the following corollary.

\begin{corollary} \label{cor:splitting}
	Let $k$ be a field and $\mathscr{G}\to X$ be a Nisnevich locally trivial $G$-torsor where $X\in \Smk$ and $G$ is a reductive group over $k$. Let $T\le G$ be a maximal torus and $N=N_G(T)$ be the normalizer of $T$ in $G$. Let $A^{*,*}(-)$ be a cohomology theory representable in $\SHk$ and suppose that one of the following holds:
	\begin{itemize}
		\item 
		$\chark k = 0$,
		\item 
		$\chark k = p\neq 0$ and $G/N$ is strongly dualizable or $A^{*,*}(X)$ has no $p$-torsion.
	\end{itemize}
	Then there exists $Y\in \Smk$ and a morphism $p\colon Y\to X$ such that
	\begin{enumerate}
		\item
		the $G$-torsor $p^*\mathscr{G}$ is induced from an $N$-torsor, i.e. there exists an $N$-torsor $\mathscr{N}\to Y$ such that
		\[
		\mathscr{N}\times^N G \cong p^*\mathscr{G},
		\]
		\item 
		the pullback homomorphism 
		\[
		p^A\colon A^{*,*}(X)\to A^{*,*}(Y)
		\]
		is split injective.
	\end{enumerate}
\end{corollary}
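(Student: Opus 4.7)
The natural choice for $Y$ is $Y = \mathscr{G}/N$ with $p\colon Y\to X$ the induced projection. Nisnevich local triviality of $\mathscr{G}\to X$ makes $p$ a Nisnevich locally trivial fibration with fiber $G/N$, so in particular $Y$ is a smooth $k$-variety. For item (1), I would take $\mathscr{N}$ to be $\mathscr{G}$ viewed as an $N$-torsor over $Y$ via the quotient map $\mathscr{G}\to \mathscr{G}/N$; the standard formula $[(g,h)]\mapsto ([g],g\cdot h)$ then defines a $G$-equivariant isomorphism $\mathscr{N}\times^N G \xrightarrow{\simeq} Y\times_X\mathscr{G}=p^*\mathscr{G}$, which is a formal check for any principal bundle and subgroup.

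For item (2), the plan is to apply Marc Levine's motivic Becker--Gottlieb transfer construction from \cite{Lev19}. Given a Nisnevich locally trivial fibration with strongly dualizable fiber $F$, Levine constructs a transfer morphism $\mathrm{Tr}_p\colon \Sigma^\infty_\T X_+ \to \Sigma^\infty_\T Y_+$ satisfying the key identity that $\Sigma^\infty_\T p_+\circ \mathrm{Tr}_p$ equals multiplication by $\chi^{\A^1}(F)\in \GW(k)$. Applying a representable cohomology theory $A^{*,*}$ reverses arrows and yields $p^A\circ \mathrm{Tr}_p^A = \chi^{\A^1}(G/N)\cdot \mathrm{id}_{A^{*,*}(X)}$, where the right-hand side denotes the action of $\chi^{\A^1}(G/N)$ through the $\GW(k)$-module structure on $A^{*,*}(X)$.

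In the two easy cases of the hypothesis --- $\chark k = 0$, or $\chark k = p$ with $G/N$ strongly dualizable in $\SHk$ --- Theorem~\ref{thm:dualizable} supplies strong dualizability and Theorem~\ref{thm:main} gives that $\chi^{\A^1}(G/N)$ is a unit in $\GW(k)$. Consequently $(\chi^{\A^1}(G/N))^{-1}\cdot \mathrm{Tr}_p^A$ is a retraction of $p^A$, so $p^A$ is split injective, finishing the proof in these cases.

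The main obstacle, and where care is needed, is the remaining case $\chark k = p$ without strong dualizability of $G/N$ in $\SHk$. Here the plan is to carry out the previous argument inside $\SHk[\tfrac{1}{p}]$, using Theorem~\ref{thm:dualizable}(2) to get strong dualizability of $G/N$ after inverting $p$ and Theorem~\ref{thm:main}(2) to get invertibility of $\chi^{\A^1}_{\Z[1/p]}(G/N)$ in $\GW(k)[\tfrac{1}{p}]$. This produces a retraction for the $p$-inverted pullback $p^A\otimes \Z[\tfrac{1}{p}]\colon A^{*,*}(X)[\tfrac{1}{p}]\to A^{*,*}(Y)[\tfrac{1}{p}]$. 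The hypothesis that $A^{*,*}(X)$ has no $p$-torsion ensures that $A^{*,*}(X)\hookrightarrow A^{*,*}(X)[\tfrac{1}{p}]$ is injective, and a diagram chase using the explicit form of the retraction --- a scaled version of the transfer, which itself is defined on the full $A^{*,*}(Y)$ --- should transport the retraction back to $A^{*,*}(X)$, yielding the desired split injectivity of $p^A$.
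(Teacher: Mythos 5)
Your choice $Y=\mathscr{G}/N$ and the verification of item~(1) coincide with the paper's. For item~(2), the paper simply observes that $p$ is a Nisnevich locally trivial fibration with fibre $G/N$ and then invokes Theorem~\ref{thm:main} together with \cite[Theorem~1.10]{Lev19}, which already packages the transfer argument and covers all three sub-cases of the hypothesis. Your rederivation of the two ``easy'' cases (characteristic~$0$, and characteristic~$p$ with $G/N$ strongly dualizable in $\SHk$) via the identity $\Sigma^\infty_\T p_+\circ\mathrm{Tr}_p=\chi^{\A^1}(G/N)\cdot\id$ is correct and is in spirit the same argument.

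The last paragraph, however, does not close the remaining case. The transfer $\mathrm{Tr}_p$ only lives in $\SHk[\tfrac{1}{p}]$; since $\Sigma^\infty_\T X_+$ is compact, it has the form $\tau/p^n$ with $\tau\in\Hom_{\SHk}(\Sigma^\infty_\T X_+,\Sigma^\infty_\T Y_+)$, and writing $\bigl(\chi^{\A^1}_{\Z[1/p]}(G/N)\bigr)^{-1}=w/p^m$ with $w\in\GW(k)$, what survives to an honest morphism in $\SHk$ is $w\tau$, with $\Sigma^\infty_\T p_+\circ w\tau=p^{n+m}\cdot\id$ (up to a $p$-power-torsion class in $\GW(k)$ which acts by zero on the $p$-torsion-free $A^{*,*}(X)$). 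Applying $A^{*,*}$ therefore gives $(w\tau)^A\circ p^A=p^{n+m}\cdot\id_{A^{*,*}(X)}$. Because $A^{*,*}(X)$ has no $p$-torsion this proves injectivity of $p^A$, but it does \emph{not} produce a retraction: the assertion that the scaled transfer ``is defined on the full $A^{*,*}(Y)$'' and that a diagram chase transports the $\Z[\tfrac{1}{p}]$-linear retraction back integrally is exactly where the argument breaks. (For abelian groups, $g\circ f=p^m\cdot\id$ with the source $p$-torsion-free does not imply $f$ split injective; e.g.\ $f=p^m\colon\Z\to\Z$, $g=\id$.) This is precisely the subtlety that \cite[Theorem~1.10]{Lev19} is cited to handle, and your proposal would need to either reproduce Levine's argument or cite it as the paper does.
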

\begin{proof}
	Let $Y=\mathscr{G}/N$ and $p\colon Y=\mathscr{G}/N \to X$ be the projection. Then it is clear that the $G$-torsor $p^*\mathscr{G}$ is induced from the $N$-torsor $\mathscr{G}\to \mathscr{G}/N=Y$ and we have item (1). For the item (2) note that Nisnevich local triviality of $\mathscr{G}$ yields that $p$ is a Nisnevich locally trivial fibration with the fiber $G/N$.	If $\chark k\neq 0$ or $G/N$ is strongly dualizable then Theorem~\ref{thm:main} yields that $\chi^{\A^1}(G/N)$ is a unit whence \cite[Theorem~1.10]{Lev19} gives the  claim. If $\chark k=p\neq 0$ then Theorem~\ref{thm:main} yields that $\chi^{\A^1}_{\Z[\frac{1}{p}]}(G/N)$ is a unit. Applying \cite[Theorem~1.10]{Lev19} to $A[\frac{1}{p}]$ we obtain that the pullback homomorphism $	A^{*,*}(X)\otimes_{\Z} \Z[\tfrac{1}{p}]\to A^{*,*}(Y)\otimes_{\Z} \Z[\tfrac{1}{p}]$	is injective. The claim follows since $A^{*,*}(X)$ has no $p$-torsion.
\end{proof}

\end{document}